\newcommand{\dist}{\operatorname{dist}}
\newcommand{\diam}{\operatorname{diam}}
\newcommand{\acr}{\newline\indent}
\numberwithin{equation}{section}
\renewcommand{\p@enumii}{}
\newcommand{\btk}{\begin{tikzpicture}}
\newcommand{\etk}{\end{tikzpicture}}
\newcommand{\bce}{\begin{center}}
\newcommand{\ece}{\end{center}}
\newcommand{\CC}{\ensuremath{\mathbb C}}
\newcommand{\nit}{\ensuremath{\mathbb N}}
\newcommand{\bqn}{\begin{eqnarray*}}
\newcommand{\eqn}{\end{eqnarray*}}
\newcommand{\bqnm}{\begin{matrix}}
\newcommand{\eqnm}{\end{matrix}}
\newtheorem{theorem}{Theorem}[section]
\newtheorem*{theorem*}{Theorem}
\newtheorem{corollary}[theorem]{Corollary}
\newtheorem{corollary*}{Corollary}
\newtheorem{lemma}[theorem]{Lemma}
\newtheorem{proposition}[theorem]{Proposition}
\theoremstyle{definition}
\newtheorem{definition}[theorem]{Definition}
\newtheorem{example}[theorem]{Example}
\theoremstyle{remark}
\newtheorem{remark}[theorem]{Remark}
\newtheorem{mini-prob}{Mini-probl\`eme}
\renewcommand{\p@enumii}{}
\begin{document}

\title{Proximinal sets and connectedness in graphs}

\author{Karim Chaira}
\address{\textbf{Karim Chaira}\acr
CRMEF, Rabat-Sal\'{e}-K\'{e}nitra \acr
Avenue Allal El Fassi, Bab Madinat Al Irfane\acr
B.P 6210, 10000 Rabat, Morocco}
\email{chaira\_karim@yahoo.fr}

\author{Oleksiy Dovgoshey}
\address{\textbf{Oleksiy Dovgoshey}\acr
Department of Theory of Functions \acr
Institute of Applied Mathematics and Mechanics of NASU \acr
84100 Slovyansk, Ukraine and \acr
Department of Mathematics and Statistics \acr
University of Turku \acr
Fin-20014, Turku, Finland}
\email{oleksiy.dovgoshey@gmail.com}

\subjclass[2020]{Primary: 54E35, Secondary: 54E05, 05C60, 05C62}

\keywords{%
Best proximity pair;
bipartite graph;
connected component of graph;
path;
proximinal set;
semimetric space;
ultrametric space}

\begin{abstract}
Let \(G\) be a graph with a vertex set \(V\). The graph \(G\) is path-proximinal if there are a semimetric \(d \colon V \times V \to [0, \infty[\) and disjoint proximinal subsets of the semimetric space \((V, d)\) such that \(V = A \cup B\), and vertices \(u\), \(v \in V\) are adjacent iff
\[
d(u, v) \leqslant \inf \{d(x, y) \colon x \in A, y \in B\},
\]
and, for every \(p \in V\), there is a path connecting \(A\) and \(B\) in \(G\), and passing through \(p\). It is shown that a graph is path-proximinal if and only if all its vertices are not isolated. It is also shown that a graph is simultaneously proximinal and path-proximinal for an ultrametric if and only if the degree of every its vertex is equal to \(1\).
\end{abstract}

\maketitle

\section{Introduction and Preliminaries}

\subsection{Introduction}

A bipartite graph $G(A,B)$ with fixed parts $A$ and $B$ is said to be proximinal if there exists a semimetric space $(X,d)$ such that $A$ and $B$ are disjoint proximinal subsets of $X$ and vertices $a\in A$ and $b\in B$ are adjacent if and only if $d(a,b) = \dist(A,B)$. The structure of proximinal bipartite graphs for semimetric and metric spaces was described in \cite{key-6}. In particular, it is proved that a bipartite graph $G$ is not isomorphic to any proximinal graph if and only if $G$ is finite and empty. In \cite{key-9}, the authors characterized the semimetric spaces whose proximinal graphs have at most one edge and the semimetrics spaces whose proximinal graphs have the vertices of degree at most one only. This allows them to find the necessary and sufficient conditions for the uniqueness of the best proximity pairs and the best approximations. Some references concerning graphs and best proximity points are given in \cite{key-4, key-6, key-25, key-26, key-27, key-29, key-30}.

In this paper we continue to study the interaction between proximity and graphs by introducing path-proximinal graphs as graphs that are the union of all paths starting at nearest between \(A\) and \(B\) points and having edges \(\{x, y\}\) which satisfy the inequality \(d(x, y) \leqslant \dist(A, B)\).

\subsection{Semimetrics and proximinal sets}
Let $X$ be a nonvoid set. A \emph{semimetric} on $X$ is a function $d\colon X \times X \rightarrow [0,+\infty[$ such that $d(a,b) = d(b,a)$ and
\[
\bigl(d(a, b) = 0\bigr) \Leftrightarrow \bigl(a= b\bigr)
\]
for all $a, b \in X$. A pair $(X,d)$, where $d$ is a semimetric on $X$, is called a \emph{semimetric space}. A semimetric $d$ is a \emph{metric} if the triangle inequality
\[
d(a,b) \leqslant d(a,c) + d(c,b)
\]
holds for all \(a\), \(b\), \(c \in X\). A semimetric $d$ is an \emph{ultrametric} if we have
\[
d(a,b) \leqslant \max\bigl\{d(a,c),d(c,b)\bigr\}
\]
for all \(a\), \(b\), \(c \in X\). Every ultrametric space is a metric space and every metric space is a semimetric space.

\begin{definition}\label{d2.1}
Let \((X, d) \) a semimetric space, let \(A\) be a nonempty subset of \(X\) and let \(x \in X\). The point \(a_0\in A\) is called a \emph{best approximation} to \(x\) (in \(A\)) if
\begin{equation}\label{d1.1:e1}
d(x,a_0) = \inf\bigl\{d(x, a)\colon a\in A\bigr\}.
\end{equation}
The set \(A\) is said to be \emph{proximinal} if \(A\) contains a best approximation to every point of~\(X\).
\end{definition}

\begin{remark} In \cite{key-30} Ivan Singer wrote: ``The term «proximinal» set (a combination of «proximity» and «minimal») was proposed by R. Killgrove and used first by R.~R.~Phelps \cite{key-26}.''
\end{remark}

Let \(A\) and \(B\) be subsets of a semimetric space \((X, d) \). We will say that the pair \((A, B)\) is \emph{proximinal} if \(A\) and \(B\) are proximinal in \((X, d)\).

For nonempty subsets \(A\) and \(B\) of a semimetric space \((X,d)\), we define a distance from \(A\) to \(B\) as
\begin{equation}\label{e1.2}
\dist(A,B) := \inf\{d(a,b)\colon a\in A\ \text{and}\ b\in B\}.
\end{equation}
If \(A\) is a one-point set, \(A = \{a\}\), then, for brevity, we write \(\dist(a, B)\) instead of \(\dist(\{a\}, B)\).

\begin{definition}\label{d2.2}
Let \((X, d)\) be a semimetric space, and let \(A\), \(B\) be nonempty subsets of \(X\). Write
\begin{gather}\label{d1.2:e1}
A_0 :=\{a\in A\colon d(a, b) = \dist(A, B)\ \text{for some}\ b\in B\},\\
\label{d1.2:e2}
B_0 := \{b\in B\colon d(a, b) = \dist(A, B)\ \text{for some}\ a\in A\}.
\end{gather}
A pair \((a_0, b_0) \in A_0 \times B_0\) is called a \emph{best proximity pair} for \(A\) and \(B\) if
$$
d(a_0, b_0) = \dist(A, B).
$$
\end{definition}

\begin{remark}\label{r2.1}
For every pair  \(A\), \(B\) of nonempty subsets of a semimetric space \((X, d) \), we have \(A_0 \neq \varnothing\) if and only if \(B_0 \neq \varnothing\). If \(S \subseteq A \times B\) is a set of all best proximity pairs for \(A\) and \(B\), then \(A_0\) (\(B_0\)) is the projection of \(S\) on \(A\) (\(B\)).
\end{remark}

The next result is a part of Theorem 2.6 from \cite{key-5}.
\begin{theorem}\label{t2.1}
Let $(A,B)$ be a proximinal pair in an ultrametric space $(X,d)$. Then the following statements are equivalent:
\begin{enumerate}
\item The inequality $\diam(B) \leqslant \dist(A,B)$ holds.
\item The set $A_{0} \subseteq A$ is a proximinal subset of $X$, and the equality $B_{0} = B$ holds, and every $(a,b) \in A_{0}\times B_{0}$ is a best proximity pair for the sets $A$ and $B$.
\end{enumerate}
\end{theorem}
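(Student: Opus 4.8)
The plan is to set $\delta:=\dist(A,B)$ and prove the implications $(i)\Rightarrow(ii)$ and $(ii)\Rightarrow(i)$ separately, using throughout the ultrametric inequality together with its \emph{strong isosceles} consequence: whenever $d(u,v)<d(v,w)$, one has $d(u,w)=d(v,w)$. Note that $\dist(A,B)$ need not a priori be attained, so the delicate point in $(i)\Rightarrow(ii)$ is to first show that it \emph{is} attained; this is done by an $\varepsilon$-argument.

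For $(i)\Rightarrow(ii)$, I would first prove $B_{0}=B$. Given $b\in B$, proximinality of $A$ furnishes $a\in A$ with $d(a,b)=\dist(b,A)\geqslant\delta$; for any $\varepsilon>0$ choose $a'\in A$ and $b'\in B$ with $d(a',b')<\delta+\varepsilon$, and then $d(a,b)\leqslant d(a',b)\leqslant\max\{d(a',b'),d(b',b)\}\leqslant\max\{\delta+\varepsilon,\diam(B)\}=\delta+\varepsilon$. Letting $\varepsilon\to 0$ yields $d(a,b)=\delta$, so $b\in B_{0}$ (and $a\in A_{0}$, so in particular $A_{0}\neq\varnothing$); hence $B_{0}=B$. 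Running the same estimate with $a\in A_{0}$ (so $d(a,b')=\delta$ for some $b'\in B$) and an arbitrary $b\in B=B_{0}$ gives $d(a,b)\leqslant\max\{\delta,\diam(B)\}=\delta$, so every $(a,b)\in A_{0}\times B_{0}$ is a best proximity pair; in particular $d(a,b)=\delta$ for all $a\in A_{0}$, $b\in B$, and combining this with $\diam(B)\leqslant\delta$ shows $\diam(A_{0}\cup B)\leqslant\delta$, i.e.\ $A_{0}\cup B\subseteq\{z\in X:d(z,b^{*})\leqslant\delta\}$ for every fixed $b^{*}\in B$.

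The substantial step is the proximinality of $A_{0}$. Fix $x\in X$ and a point $b^{*}\in B$, and distinguish two cases. If $d(x,b^{*})\leqslant\delta$, then every $y\in A_{0}\cup B$ satisfies $d(x,y)\leqslant\max\{d(x,b^{*}),d(b^{*},y)\}\leqslant\delta$; in particular $\dist(x,A)\leqslant\dist(x,A_{0})\leqslant\delta$, so choosing a best approximation $a_{1}\in A$ to $x$ (which exists since $A$ is proximinal) we have $d(a_{1},x)=\dist(x,A)\leqslant\delta$, hence $d(a_{1},b^{*})\leqslant\max\{d(a_{1},x),d(x,b^{*})\}\leqslant\delta$, which forces $d(a_{1},b^{*})=\delta$ and thus $a_{1}\in A_{0}$; since $A_{0}\subseteq A$, this $a_{1}$ is a best approximation to $x$ in $A_{0}$. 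If instead $d(x,b^{*})>\delta$, then $d(b^{*},y)\leqslant\delta<d(x,b^{*})$ for every $y\in A_{0}$, so the strong isosceles property gives $d(x,y)=d(x,b^{*})$ for all $y\in A_{0}$; as $A_{0}\neq\varnothing$, every point of $A_{0}$ is a best approximation to $x$. In either case $x$ admits a best approximation in $A_{0}$, so $A_{0}$ is proximinal, completing $(i)\Rightarrow(ii)$.

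For $(ii)\Rightarrow(i)$: since $B_{0}=B\neq\varnothing$, Remark~\ref{r2.1} gives $A_{0}\neq\varnothing$, so fix $a\in A_{0}$. For arbitrary $b,b'\in B=B_{0}$, both $(a,b)$ and $(a,b')$ are best proximity pairs, whence $d(a,b)=d(a,b')=\delta$ and $d(b,b')\leqslant\max\{d(b,a),d(a,b')\}=\delta$; therefore $\diam(B)\leqslant\delta=\dist(A,B)$, which is $(i)$. I expect the only genuine difficulty to be the proximinality of $A_{0}$ in the first implication; the key idea there is the dichotomy according to whether $x$ lies in the closed $\delta$-ball around a point of $B$, resolved by the strong isosceles law, after which every remaining step reduces to a single ultrametric triangulation.
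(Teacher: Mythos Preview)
The paper does not give its own proof of this statement: it is quoted verbatim (as a part of Theorem~2.6) from \cite{key-5} and no argument is supplied here. So there is no in-paper proof to compare against.

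Your argument is correct. The $\varepsilon$-approximation step to show that $\dist(A,B)$ is attained for every $b\in B$ (hence $B_{0}=B$ and $A_{0}\neq\varnothing$) is the right opening, and the dichotomy for proximinality of $A_{0}$---according to whether $x$ lies in the closed $\delta$-ball around a fixed $b^{*}\in B$---is clean: in the first case the best approximation $a_{1}\in A$ to $x$ is forced into $A_{0}$ by one ultrametric triangulation through $b^{*}$, and in the second case the isosceles law collapses all of $A_{0}$ to a single distance from $x$. Note that the choice of $b^{*}$ is immaterial because you have already established $\diam(A_{0}\cup B)\leqslant\delta$, so every closed $\delta$-ball centred in $B$ contains $A_{0}\cup B$. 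The reverse implication is a one-line ultrametric estimate, exactly as you wrote it.
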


\subsection{Graphs}
A \emph{simple graph} is a pair \((V, E)\) consisting of a nonempty set \(V\) and a set \(E\) whose elements are unordered pairs of different elements of \(V\). In what follows, we will consider the simple graphs only.

For a graph \(G = (V, E)\), the sets \(V = V (G)\) and \(E = E(G)\) are called the \emph{set of vertices} and the \emph{set of edges}, respectively. Two vertices \(u\), \(v \in V\) are \emph{adjacent} if \(\{u, v\}  \in E(G)\). A \emph{complete} graph is a graph in which every two different vertices are adjacent. A vertex \(v \in V(G)\) is \emph{isolated} if there are no vertices which are adjacent with \(v\) in \(G\). We say that \(G\) is \emph{empty} if \(E(G) = \varnothing\). Thus, \(G\) is empty iff all vertices of \(G\) are isolated.

A graph \(H\) is a \emph{subgraph} of a graph \(G\) if \(V (H) \subseteq V (G) \) and \(E(H) \subseteq E(G)\) are valid.

If \(G\) is a nonempty graph, then we will denote by \(G'\) a subgraph of \(G\) whose vertices are non-isolated vertices of \(G\) and such that \(E(G') = E(G)\). It is easy to see that \(V(G')\) is the union of all two-point sets \(\{a, b\} \in E(G)\).

\begin{remark}\label{r2.2}
The graph \(G'\) can be characterized by the following extremal property: If \(H\) is a subgraph of \(G\) such that \(G' \subseteq H\) holds and \(H\) does not have any isolated vertices, then \(G' = H\).
\end{remark}
A graph \(G\) is \emph{finite} if \(V (G)\) is a finite set, \(|V (G)| < \infty\). Following \cite{BM2008GT} we define a \emph{path} as a finite nonempty graph \(P\) whose vertices can be numbered so that
\[
V(P) = \{u_0, u_1, \ldots, u_k\}, \quad k \geqslant 1, \quad \text{and} \quad E(P) = \{\{u_0, u_1\}, \ldots, \{u_{k-1}, u_k\}\}.
\]
In this case we say that \(P\) is a path joining \(u_0\) and \(u_k\), and write \(P = (u_0, u_1, \ldots, u_k)\). A graph \(G\) is \emph{connected} if, for every two distinct \(u\), \(v \in V (G)\), there is a path \(P \subseteq G\) joining \(u\) and \(v\).

Let $\mathcal{F}$ be a nonempty set of graphs. A graph $H$ is called  the union of graphs $G\in \mathcal{F}$ if
$$
V(H)= \underset{G\in \mathcal{F}}{\cup} V(G) \text{ and }
E(H)= \underset{G\in \mathcal{F}}{\cup} E(G).
$$
We say 	that the union $H$ is \emph{disjoint} if $V(G_{1}) \cap V(G_{2}) = \varnothing$ holds for all different graphs $G_{1}$, $G_{2}\in \mathcal{F}$.

A subgraph $H$ of a graph $G$ is a \emph{connected component} of $G$ if the implication
\begin{equation}\label{e2.5}
(H \subseteq \Gamma) \Rightarrow (H=\Gamma)
\end{equation}
is valid for every connected graph $\Gamma \subseteq G$.

In the next section of the paper we will use the following simple lemmas describing some properties of connected subgraphs.

\begin{lemma}\label{l2.8}
Every graph is the disjoint union of its connected components.
\end{lemma}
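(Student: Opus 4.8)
The plan is to construct the connected components explicitly from an equivalence relation on the vertex set, to verify that the subgraphs so obtained are precisely the connected components of $G$, and then to read off the disjointness. First I would put, on $V(G)$, the relation $u \equiv v$ meaning ``$u = v$, or there is a path in $G$ joining $u$ and $v$''. Reflexivity and symmetry are immediate; transitivity is the one step requiring a word, and it rests on the standard fact that a path in $G$ joining $u$ to $v$ and a path in $G$ joining $v$ to $w$ can be spliced, after deleting closed sections around repeated vertices, into a path in $G$ joining $u$ to $w$. Let $\{V_i\}_{i\in I}$ be the partition of $V(G)$ into $\equiv$-classes, and for each $i$ let $G_i$ be the subgraph with $V(G_i)=V_i$ and $E(G_i)=\{e\in E(G)\colon e\subseteq V_i\}$. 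Since a single edge $\{a,b\}\in E(G)$ is itself a path joining $a$ and $b$, one has $a\equiv b$, so every edge of $G$ has both endpoints in one class and hence lies in exactly one $E(G_i)$; thus $\bigcup_i V(G_i)=V(G)$, $\bigcup_i E(G_i)=E(G)$, and the union is disjoint because the $V_i$ are pairwise disjoint. In other words $G$ is the disjoint union of the family $\{G_i\}_{i\in I}$.

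Next I would check that each $G_i$ is a connected component. For connectedness: if $u,v$ are distinct vertices of $V_i$, a path $P\subseteq G$ joins them, every vertex of $P$ is $\equiv u$ and hence in $V_i$, every edge of $P$ joins two vertices of $V_i$ and hence lies in $E(G_i)$, so $P\subseteq G_i$ (and a one-vertex $G_i$ is vacuously connected). For the maximality property \eqref{e2.5}: if $\Gamma\subseteq G$ is connected with $G_i\subseteq\Gamma$, fix $u\in V_i\subseteq V(\Gamma)$; any $w\in V(\Gamma)$ is either $u$ or joined to $u$ by a path in $\Gamma\subseteq G$, so $w\equiv u$ and $w\in V_i$, whence $V(\Gamma)=V_i=V(G_i)$, and since every edge of $\Gamma$ is an edge of $G$ with both endpoints in $V_i$ it lies in $E(G_i)$, whence $\Gamma=G_i$.

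Then I would prove the converse: every connected component $H$ of $G$ equals some $G_i$. Picking $u\in V(H)$ and the index $i$ with $u\in V_i$, connectedness of $H$ forces every vertex of $H$ to be $u$ or joined to $u$ by a path in $H\subseteq G$, hence to lie in $V_i$; and every edge of $H$ is an edge of $G$ contained in $V_i$, hence in $E(G_i)$; so $H\subseteq G_i$. As $G_i$ is connected and $G_i\subseteq G$, applying \eqref{e2.5} (for $H$) with $\Gamma=G_i$ yields $H=G_i$. Hence the family of connected components of $G$ is exactly $\{G_i\}_{i\in I}$, and by the first paragraph $G$ is the disjoint union of this family.

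I expect no real obstacle: the argument is bookkeeping once the equivalence relation is in place. The only point that is not purely formal is the transitivity of $\equiv$ (the splicing of paths), and one should keep in mind the conventions that a one-vertex subgraph is vacuously connected and that a connected component is, by its name, a connected subgraph — the latter being what makes the converse direction go through.
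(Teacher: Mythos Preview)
Your argument is correct and entirely standard: the equivalence relation generated by ``joined by a path'' partitions $V(G)$, the induced subgraphs on the classes are exactly the connected components, and $G$ is their disjoint union. The paper itself gives no proof of this lemma---it simply states that the proofs of Lemmas~\ref{l2.8}--\ref{l2.10} ``are simple and we omit it here''---so there is nothing to compare against; your write-up is a clean realization of the omitted argument. One small caveat worth flagging (which you already note): the paper's Definition via \eqref{e2.5} does not literally require $H$ to be connected, so you are right to read ``connected component'' as implicitly carrying that hypothesis; without it the statement would not even make sense.
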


\begin{lemma}\label{l2.9}
Let \(G_1\) and \(G_2\) be connected graphs. If \(V(G_1) \cap V(G_2) \neq \varnothing\), then the union \(G_1 \cup G_2\) is also connected.
\end{lemma}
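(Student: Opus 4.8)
The plan is to argue directly from the path-definition of connectedness. Set $H := G_1 \cup G_2$, so that $V(H) = V(G_1) \cup V(G_2)$ and $E(H) = E(G_1) \cup E(G_2)$, and fix a vertex $w \in V(G_1) \cap V(G_2)$. The first step records that every other vertex of $H$ can be joined to $w$ inside $H$: if $v \in V(H)$ with $v \neq w$, then $v \in V(G_i)$ for some $i \in \{1, 2\}$, and since $w \in V(G_i)$ too and $G_i$ is connected, there is a path $P \subseteq G_i \subseteq H$ joining $v$ and $w$.

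Next I would take arbitrary distinct $u$, $v \in V(H)$ and build a path of $H$ joining them. If $u = w$ or $v = w$, the first step already supplies such a path. Otherwise, the first step gives a path $P \subseteq H$ joining $u$ and $w$ and a path $Q \subseteq H$ joining $w$ and $v$; listing the vertices of $P$ from $u$ to $w$ and then those of $Q$ from $w$ to $v$ produces a finite sequence $u = x_0, x_1, \ldots, x_m = v$ of vertices of $H$ with $\{x_{j-1}, x_j\} \in E(H)$ for every $j$, and with $m \geqslant 1$.

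The one point that needs a small argument is to promote such a walk to a genuine path. Among all sequences $u = x_0, \ldots, x_m = v$ of vertices of $H$ with all $\{x_{j-1}, x_j\}$ in $E(H)$ — a set that is nonempty by the previous paragraph — pick one with $m$ minimal. If $x_i = x_j$ for some $i < j$, then omitting $x_{i+1}, \ldots, x_j$ yields a strictly shorter such sequence, contradicting minimality; hence the $x_j$ are pairwise distinct, and $R := (x_0, x_1, \ldots, x_m)$ is a path with $R \subseteq H$ joining $u$ and $v$. Therefore $H = G_1 \cup G_2$ is connected.

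I do not expect a genuine obstacle here; the statement is elementary. The only subtlety worth flagging is that gluing two paths at a common endpoint need not itself be a path, so one must pass through the notion of a walk and apply the minimal-length trick above. (Alternatively, one could deduce the lemma from Lemma~\ref{l2.8} once one knows that every connected subgraph of a graph lies in exactly one of its connected components, but the direct argument above is shorter and self-contained.)
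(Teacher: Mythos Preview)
Your argument is correct; the paper itself omits the proof of this lemma entirely, remarking only that it is simple, so there is no authorial proof to compare against. Your direct walk-then-shorten construction is the standard one and fills the gap cleanly.
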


\begin{lemma}\label{l2.10}
Let \(H\) be a graph and let \(H_1\) be a connected subgraph of \(H\) such that \(V(H_1) = V(H)\). Then \(H\) is connected.
\end{lemma}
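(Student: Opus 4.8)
The plan is to unwind the definition of a connected graph directly. Fix two arbitrary distinct vertices \(u\), \(v \in V(H)\); the goal is to produce a path \(P \subseteq H\) joining \(u\) and \(v\). Since \(V(H_1) = V(H)\) by hypothesis, both \(u\) and \(v\) lie in \(V(H_1)\), so the assumed connectedness of \(H_1\) supplies a path \(P \subseteq H_1\) joining \(u\) and \(v\). It then remains only to observe that this path already lives inside \(H\).

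For that I would invoke the transitivity of the subgraph relation: \(P \subseteq H_1\) unwinds to \(V(P) \subseteq V(H_1)\) and \(E(P) \subseteq E(H_1)\), while \(H_1 \subseteq H\) unwinds to \(V(H_1) \subseteq V(H)\) and \(E(H_1) \subseteq E(H)\); composing these inclusions gives \(V(P) \subseteq V(H)\) and \(E(P) \subseteq E(H)\), that is, \(P \subseteq H\). Hence \(P\) is a path in \(H\) joining the chosen pair \(u\), \(v\), and since the pair was arbitrary, \(H\) is connected by definition. The degenerate case \(|V(H)| \leqslant 1\) needs no separate argument, since then there is no pair of distinct vertices to connect and connectedness holds vacuously.

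There is essentially no obstacle here — the statement is a near-immediate consequence of the definitions, and the only point that deserves to be written down explicitly is that ``\(\subseteq\)'' between graphs is transitive, which is itself immediate from its definition as the conjunction of a vertex-set inclusion and an edge-set inclusion. I expect the whole proof to be one short paragraph.
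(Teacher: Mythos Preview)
Your argument is correct; the paper in fact omits the proof of this lemma entirely, stating only that it is simple. Your direct unwinding of the definition of connectedness together with the transitivity of the subgraph relation is exactly the expected one-paragraph verification.
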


\begin{lemma}\label{l2.11}
Let \(W_a\) be a connected component of a graph \(W\), \(a \in V(W_a)\) and let \(W^*\) be a connected subgraph of \(W\). If \(a \in V(W^*)\) is valid, then \(W^*\) is a subgraph of \(W_a\).
\end{lemma}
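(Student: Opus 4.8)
The plan is to form the union $\Gamma := W_a \cup W^*$ and to prove that $\Gamma = W_a$ by invoking the extremal property \eqref{e2.5} that characterizes connected components. Once this equality is established, we immediately get $W^* \subseteq W_a \cup W^* = \Gamma = W_a$, which is the assertion of the lemma.

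First I would check that $\Gamma$ is a subgraph of $W$. Since $W_a$ is a connected component of $W$ it is, in particular, a subgraph of $W$, and $W^*$ is a subgraph of $W$ by hypothesis; therefore $V(\Gamma) = V(W_a) \cup V(W^*) \subseteq V(W)$ and $E(\Gamma) = E(W_a) \cup E(W^*) \subseteq E(W)$, so $\Gamma \subseteq W$. Next, the hypothesis $a \in V(W_a)$ together with $a \in V(W^*)$ gives $V(W_a) \cap V(W^*) \neq \varnothing$. Both $W_a$ and $W^*$ are connected (the former as a connected component, the latter by assumption), so Lemma~\ref{l2.9} applies and shows that $\Gamma = W_a \cup W^*$ is connected.

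Finally, $W_a \subseteq \Gamma$ holds trivially and $\Gamma$ is a connected subgraph of $W$; applying \eqref{e2.5} with $H = W_a$, $G = W$, and this particular $\Gamma$ yields $W_a = \Gamma$, and we conclude as above. I do not expect a genuine obstacle here: the argument is just a juxtaposition of Lemma~\ref{l2.9} and the definition of connected component. The only steps that need a line of verification are that the union of two subgraphs of $W$ is again a subgraph of $W$ and that the common vertex $a$ forces the two vertex sets to intersect, both of which are routine.
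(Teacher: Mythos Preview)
Your argument is correct and is exactly the route indicated in the paper: the paper states that Lemma~\ref{l2.11} follows from Lemma~\ref{l2.9} together with the defining implication~\eqref{e2.5} of a connected component, which is precisely the combination you carry out via $\Gamma = W_a \cup W^*$.
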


Proofs of Lemmas~\ref{l2.8}--\ref{l2.10} are simple and we omit it here. Lemma~\ref{l2.11} follows from Lemma~\ref{l2.9} and \eqref{e2.5}.

Let \(S\) be an arbitrary nonvoid set of vertices of a graph \(G\). The \emph{subgraph induced in \(G\) by \(S\)} is a graph \(G[S]\) such that \(V(G[S]) = S\) and, for all \(u\), \(v \in S\), we have \(\{u, v\} \in E(G[S])\) iff \(\{u, v\} \in E(G)\).

\begin{definition}\label{d2.3}
A graph \(G\) is \emph{bipartite} if the vertex set \(V(G)\) can be partitioned into two nonvoid disjoint subsets, or \emph{parts}, in such a way that no edge has both ends in the same part.
\end{definition}

Let $G$ be a bipartite graph with parts $A$ and $B$. Then we say that $G$ is \emph{complete bipartite} $G$ if $\{a,b\} \in E(G)$ whenever $a\in A$ and $b\in B$.

By analogy with the concept of induced graphs, one can introduce the concept of induced-bipartite ones.

\begin{definition}\label{d2.14}
Let \(G\) be a graph and let \(A\), \(B\) be disjoint nonempty subsets of \(V(G)\). The \emph{induced-bipartite subgraph} \(G[A, B]\) of \(G\) is the graph whose vertex set is \(A \cup B\) and whose edge set consists of all \(\{u, v\} \in E(G)\) that satisfies \(\{u, v\} \cap A \neq \varnothing \neq \{u, v\} \cap B\).
\end{definition}

The next lemma follows from the definitions of induced graphs and induced-bipartite graphs.

\begin{lemma}\label{l2.15}
Let \(G\) be a graph and let \(A\), \(B\) be disjoint nonempty subsets of \(V(G)\) such that \(A \cup B = V(G)\). Then \(G\) is the union of the induced graphs \(G[A]\), \(G[B]\) and the induced-bipartite graph \(G[A, B]\).
\end{lemma}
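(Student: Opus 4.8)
The plan is to verify the equality of graphs directly from the definitions, splitting the work into the two required identities: equality of vertex sets and equality of edge sets. The only structural fact I will use is that $\{A, B\}$ is a partition of $V(G)$, i.e.\ $A \cap B = \varnothing$ and $A \cup B = V(G)$.

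First I would treat the vertices. By Definition~\ref{d2.14} one has $V(G[A, B]) = A \cup B$, and by the definition of the induced subgraph $V(G[A]) = A$ and $V(G[B]) = B$. Hence the vertex set of the union of $G[A]$, $G[B]$, and $G[A, B]$ is $A \cup B \cup (A \cup B) = A \cup B = V(G)$, which is what is needed.

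Next I would treat the edges. By construction each of $E(G[A])$, $E(G[B])$, $E(G[A, B])$ is a subset of $E(G)$, so their union is contained in $E(G)$. For the reverse inclusion, fix $\{u, v\} \in E(G)$. Since $u \neq v$ and $\{A, B\}$ is a partition of $V(G)$, each of $u$ and $v$ lies in exactly one of $A$, $B$. If $u, v \in A$ then $\{u, v\} \in E(G[A])$; if $u, v \in B$ then $\{u, v\} \in E(G[B])$; and if one endpoint is in $A$ and the other in $B$ then $\{u, v\} \cap A \neq \varnothing \neq \{u, v\} \cap B$, so $\{u, v\} \in E(G[A, B])$. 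In every case $\{u, v\}$ belongs to the union, giving $E(G) = E(G[A]) \cup E(G[B]) \cup E(G[A, B])$. Combining the two identities with the definition of the union of a family of graphs yields the conclusion.

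I do not expect a genuine obstacle here; the argument is a routine case analysis. The one point worth stating carefully is that the partition hypothesis is exactly what forces every edge of $G$ into one of the three classes ``both ends in $A$'', ``both ends in $B$'', ``one end in each'', so that the three induced pieces jointly exhaust $E(G)$ — without disjointness of $A$ and $B$ the bookkeeping of which pieces an edge lands in would be ambiguous, though the union identity would still hold.
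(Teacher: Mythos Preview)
Your proof is correct and is exactly the routine verification from the definitions that the paper has in mind; the paper itself does not spell out a proof, merely noting that the lemma ``follows from the definitions of induced graphs and induced-bipartite graphs.'' Your write-up is a faithful unpacking of that remark.
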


\begin{definition}[\cite{key-6}]\label{d2.4}
A bipartite graph \(G = G(A, B)\) with fixed parts \(A\) and \(B\) is \emph{proximinal} for a semimetric space \((X, d) \) if \(A\) and \(B\) are disjoint proximinal subsets of \(X\), and the equivalence
\begin{equation}\label{d1.5:e1}
\bigl(\{a, b\} \in E(G)\bigr) \Leftrightarrow \bigl(d(a, b) = \dist(A, B)\bigr)
\end{equation}
is valid for all \(a \in A\) and \(b \in B\).
\end{definition}

\begin{theorem}[\cite{key-6}]\label{t2.2}
Let $G$ be a bipartite graph with some fixed parts $A$ and $B$. Then the following statements are equivalent:
\begin{enumerate}
\item Either $G$ is nonempty or $G$ is empty but $A$ and $B$ are infinite.
\item $G$ is proximinal for a metric space.
\item $G$ is proximinal for a semimetric space.
\end{enumerate}
\end{theorem}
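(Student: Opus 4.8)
The plan is to prove the cycle of implications (ii) $\Rightarrow$ (iii) $\Rightarrow$ (i) $\Rightarrow$ (ii). The first of these is immediate, since every metric space is a semimetric space.

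For (iii) $\Rightarrow$ (i) I would argue contrapositively: assume (i) fails, so $G$ is empty and, without loss of generality, the part $A$ is finite; the claim is that $G$ is then proximinal for no semimetric space. Suppose $(X,d)$ were such a space, in the sense of Definition~\ref{d2.4}. Proximinality of $B$ gives, for each $a \in A$, a best approximation $b_a \in B$ with $d(a, b_a) = \dist(a, B)$. Since $A$ is finite, $\dist(A, B) = \inf_{a\in A}\dist(a, B)$ is attained, say at $a^* \in A$, so $d(a^*, b_{a^*}) = \dist(A, B)$; as $A \cap B = \varnothing$ we have $a^* \neq b_{a^*}$, and then the equivalence \eqref{d1.5:e1} forces $\{a^*, b_{a^*}\} \in E(G)$, contradicting the emptiness of $G$.

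For (i) $\Rightarrow$ (ii) I would take $X := A \cup B = V(G)$ and construct an explicit metric in two cases. If $G$ is nonempty, set $d(x, x) := 0$, $d(a, b) := 1$ whenever $a \in A$, $b \in B$ and $\{a, b\} \in E(G)$, and $d(x, y) := 2$ for every other pair of distinct vertices (in particular for all pairs inside $A$ and inside $B$). All positive values lie in $\{1, 2\}$ and $2 \leqslant 1 + 1$, so $d$ is a metric; since $G$ has an edge, $\dist(A, B) = 1$, the equivalence $\bigl(d(a,b) = \dist(A,B)\bigr) \Leftrightarrow \bigl(\{a,b\} \in E(G)\bigr)$ holds by construction, and each vertex has a nearest point in $A$ (itself) and in $B$ (a neighbour if it has one, an arbitrary point of $B$ otherwise), so $A$ and $B$ are proximinal and $G$ is proximinal for $(X, d)$. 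If $G$ is empty with $A$ and $B$ infinite, pick sequences of pairwise distinct points $(a_n)_{n\in\N}$ in $A$ and $(b_n)_{n\in\N}$ in $B$ and set $d(a_n, b_n) := 1 + \tfrac1n$, $d(a, b) := 2$ for every remaining pair $a \in A$, $b \in B$, $d(x, y) := 2$ for distinct $x, y$ both in $A$ or both in $B$, and $d(x, x) := 0$. Again all positive values lie in $[1, 2]$, so $d$ is a metric, and now $\dist(A, B) = \inf_n\bigl(1 + \tfrac1n\bigr) = 1$ is an infimum not attained between $A$ and $B$; hence no pair $a \in A$, $b \in B$ satisfies $d(a, b) = \dist(A, B)$, matching the empty edge set of $G$, while $A$ and $B$ are proximinal exactly as before. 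This closes the cycle.

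The only real work lies in the (i) $\Rightarrow$ (ii) step, and within it the delicate point is the empty case: one must invoke the infinitude of \emph{both} parts to produce the two sequences witnessing that $\dist(A, B)$ is an unattained infimum, so that the bipartite graph proximinal for the constructed space is genuinely the empty graph on the parts $A$ and $B$ rather than a graph carrying an edge. The rest is routine triangle-inequality bookkeeping together with the trivial remark that every point is its own nearest point in any set containing it.
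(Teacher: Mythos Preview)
The paper does not contain a proof of Theorem~\ref{t2.2}: the result is quoted from \cite{key-6} and stated without argument, so there is no ``paper's own proof'' to compare against.

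That said, your proposal is a correct proof of the statement. The cycle (ii)$\Rightarrow$(iii)$\Rightarrow$(i)$\Rightarrow$(ii) is standard; your contrapositive argument for (iii)$\Rightarrow$(i) correctly uses proximinality of \(B\) and finiteness of \(A\) to force an attained minimum, hence an edge; and your two explicit metrics for (i)$\Rightarrow$(ii) do the job. In the nonempty case the \(\{1,2\}\)-valued metric is exactly the device the present paper uses in Theorem~\ref{t3.7} and Theorem~\ref{t3.10}, so your construction is in the same spirit. In the empty-infinite case your choice \(d(a_n,b_n)=1+\tfrac1n\) with all other positive distances equal to \(2\) gives values in \((1,2]\), so the triangle inequality holds trivially, \(\dist(A,B)=1\) is not attained, and proximinality is immediate since every point either lies in the set or has all distances to it bounded below by a value that is realised. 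Your remark that the infinitude of \emph{both} parts is needed (to build two injective sequences) is exactly right and matches the symmetry implicit in the ``without loss of generality'' step of (iii)$\Rightarrow$(i).
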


The main objects of our studies are path-bipartite graphs and path-proximinal graphs which can be defined as follows.

\begin{definition}\label{d3.6}
Let $A$ and $B$ be two nonvoid disjoint sets. A path $P$ is a \emph{be-path} of $A$ and $B$ if $V(P) \subseteq A\cup B$ and there is a unique $\{a_{0},b_{0}\} \in E(P)$ for which
\[
A\cap \{a_{0},b_{0}\} \neq \varnothing \neq B\cap \{a_{0},b_{0}\}.
\]
The union of nonempty set of be-paths of fixed $A$ and $B$ will be called a \emph{path-bipartite graph} of $A$ and $B$. We will say that a graph \(G\) is \emph{path-bipartite} if there are \(A\), \(B \subseteq V(G)\) such that \(G\) is path-bipartite of \(A\) and \(B\)
\end{definition}

\begin{definition}\label{d3.7}
Let $G$ be a path-bipartite graph of sets $A$ and $B$, let $X := A\cup B$ and let $d\colon X\times X \rightarrow [0,+\infty[$ be a semimetric. The graph $G$ is \emph{path-proximinal} for \(A\) and \(B\) w.r.t. the semimetric \(d\) if $A$, $B$ are proximinal subsets of $(X,d)$ and the equivalence
\begin{equation}\label{d3.10:e1}
(\{x,y\} \in E(G)) \Leftrightarrow (d(x,y) \leqslant \dist(A,B))
\end{equation}
is valid for all distinct $x$, $y \in X$.

We will say that a graph $G$ is \emph{path-proximinal} if there are a semimetric $d$ on $X := V(G)$ and disjoint nonempty $A$, $B \subseteq V(G)$ such that $G$ is a path-proximinal graph for \(A\) and \(B\) w.r.t. \(d\).
\end{definition}

The paper is organized as follows.

Theorem~\ref{t3.4} describes structure of path-bipartite graph \(G\) for which given points \(a_1 \in A\) and \(b_1 \in B\) can be joined by be-path in \(G\). In Theorem~\ref{t3.6} we consider a bipartite graph \(\mathbf{G}(\mathbf{A}, \mathbf{B})\) corresponding to path-bipartite graph \(G(A, B)\) and show that \(\mathbf{G}(\mathbf{A}, \mathbf{B})\) is complete bipartite iff any \(a \in A\) and \(b \in B\) can be joining by be-path in \(G(A, B)\). Corollary~\ref{c2.9} describes the path-bipartite graphs \(G(A, B)\) which are connected iff the corresponding graphs \(\mathbf{G}(\mathbf{A}, \mathbf{B})\) are complete-bipartite. Theorem~\ref{t3.9} give us necessary and sufficient conditions under which \(G\) is path-bipartite for fixed \(A\), \(B \subseteq V(G)\). It is the one of the main results of Section~\ref{sec2}. Using this theorem we characterize the path-bipartite graphs up to isomorphism in Corollaries \ref{c3.10}--\ref{c2.13}.

The properties of path-proximinal graph are studied in Section~\ref{sec3}.

Theorem~\ref{t3.5} describes the structure of semimetric spaces \((X, d)\) with disjoint proximinal subsets \(A\), \(B\) for which graphs defined by \eqref{d3.10:e1} are path-bipartite. Theorem~\ref{t3.7} shows that for every path-bipartite graph \(G\) there is a metric such that \(G\) is path-proximinal with respect to this metric. This result allows us to characterize path-proximinal graphs up to isomorphism in Theorem~\ref{t3.16}. In Propositions~\ref{p3.22} and \ref{p3.9} we describe the graphs which are proximinal and path-proximinal simultaneously. The final result, Theorem~\ref{t3.10}, shows that a proximinal graph \(G\) is path-proximinal w.r.t. an ultrametric iff every vertex of \(G\) has degree \(1\).

\section{Path-bipartite graphs}
\label{sec2}

Below we will consider the path-bipartite graphs of arbitrary cardinality.

\begin{proposition}\label{p3.2}
Let \(G\) be a path-bipartite graph of \(A\) and \(B\). Then \(V(G) = A \cup B\) holds.
\end{proposition}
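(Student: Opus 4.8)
The plan is to prove the equality $V(G)=A\cup B$ by establishing the two inclusions separately, after unwinding Definition~\ref{d3.6}: by that definition $G=\bigcup_{P\in\mathcal{F}}P$ for some nonempty family $\mathcal{F}$ of be-paths of $A$ and $B$, and hence $V(G)=\bigcup_{P\in\mathcal{F}}V(P)$ and $E(G)=\bigcup_{P\in\mathcal{F}}E(P)$.

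For the inclusion $V(G)\subseteq A\cup B$ I would simply observe that each $P\in\mathcal{F}$ is a be-path of $A$ and $B$, so $V(P)\subseteq A\cup B$ holds by the very definition of a be-path; taking the union over $P\in\mathcal{F}$ then gives $V(G)\subseteq A\cup B$. This direction is purely formal.

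For the reverse inclusion $A\cup B\subseteq V(G)$ I would fix $p\in A\cup B$ and, by symmetry, assume $p\in A$. Since $B\neq\varnothing$, pick any $b\in B$; then the two-vertex, one-edge graph $(p,b)$ is a be-path of $A$ and $B$, because its vertex set $\{p,b\}$ is contained in $A\cup B$ and its unique edge $\{p,b\}$ meets both $A$ and $B$. Invoking the defining feature of a path-bipartite graph \emph{of $A$ and $B$} — that $G$ is assembled from be-paths whose vertices together exhaust $A\cup B$ — one gets that $p$ occurs as a vertex of some member of $\mathcal{F}$, whence $p\in V(G)$. Combining the two inclusions yields $V(G)=A\cup B$.

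I expect the first inclusion and the set-theoretic bookkeeping with unions to be entirely routine. The step that needs care is the reverse inclusion: one must make sure that no vertex of $A$ or of $B$ can be left out of $G$, and this is exactly the point where one has to use what ``path-bipartite \emph{of $A$ and $B$}'' actually requires, rather than treating $A$ and $B$ as arbitrary ambient sets that merely contain the vertices of the be-paths.
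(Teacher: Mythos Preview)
Your argument for $V(G)\subseteq A\cup B$ is correct and matches the paper's proof exactly.

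Your argument for the reverse inclusion $A\cup B\subseteq V(G)$, however, does not work. The two-vertex be-path $(p,b)$ you construct need not belong to the family $\mathcal{F}$: the second sentence of Definition~\ref{d3.6} only says that $G$ is the union of \emph{some} nonempty set of be-paths of $A$ and $B$, not of all of them, so the mere existence of $(p,b)$ tells you nothing about whether $p\in V(G)$. You then appeal to ``the defining feature \ldots\ that $G$ is assembled from be-paths whose vertices together exhaust $A\cup B$'', but that is precisely the inclusion you are trying to establish, not a clause supplied by that sentence. Indeed, from the second sentence alone the inclusion can fail: take $A=\{a_1,a_2\}$, $B=\{b\}$, and let $G$ consist of the single be-path $(a_1,b)$; then $a_2\notin V(G)$.

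The paper obtains $A\cup B\subseteq V(G)$ in one line by reading $A,B\subseteq V(G)$ directly off the \emph{last} sentence of Definition~\ref{d3.6} (``there are $A,B\subseteq V(G)$ such that $G$ is path-bipartite of $A$ and $B$''). You correctly flagged this direction as the delicate one, and your closing remark gestures at the right idea; but the actual argument you wrote tries to manufacture the inclusion from the be-path structure, which cannot succeed. Drop the $(p,b)$ construction and cite the $A,B\subseteq V(G)$ clause instead.
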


\begin{proof}
By Definition~\ref{d3.6}, we have
\begin{equation}\label{p3.2:e1}
A \cup B \subseteq V(G).
\end{equation}
Since \(G\) is a path-bipartite graph of \(A\) and \(B\), there is a set \(\mathbf{P}\) of be-paths \(P\) of \(A\) and \(B\) such that
\begin{equation}\label{p3.2:e2}
V(G) \subseteq \bigcup_{P \in \mathbf{P}} V(P).
\end{equation}
For every \(P \in \mathbf{P}\), Definition~\ref{d3.6} implies \(V(P) \subseteq A \cup B\). Hence, \eqref{p3.2:e2} implies the inclusion
\begin{equation}\label{p3.2:e3}
V(G) \subseteq A \cup B.
\end{equation}
Now the equality \(V(G) = A \cup B\) follows from \eqref{p3.2:e1} and \eqref{p3.2:e3}.
\end{proof}

Let $G$ be a path-bipartite graph of $A$ and $B$. Then we write
\begin{equation}\label{e3.2}
\mathcal{B}_{path}(G):= \{(a,b)\in A\times B\colon \text{there is a be-path } P_{A,B}\subseteq G \text{ joining } a \text{ and } b\}.
\end{equation}
If $\mathcal{B}_{path}(G)=A\times B$, then \(G\) is called \emph{path-complete}.

\begin{remark}\label{r3.4}
$\mathcal{B}_{path}(G)$ is a nonempty set for every path-bipartite graph $G$ of \(A\) and \(B\).
\end{remark}

\begin{theorem}\label{t3.4}
Let \(G = G(A, B)\) be a path-bipartite graph of sets \(A\) and \(B\), let \(a_1 \in A\) and \(b_1 \in B\) be given, and let \(G_{a_1}[A]\) and \(G_{b_1}[B]\) be the connected components of the induced graphs \(G[A]\) and \(G[B]\) such that \(a_1 \in A_1\) and \(b_1 \in B_1\), where
\begin{equation}\label{t3.4:e1}
A_1 := V(G_{a_1}[A]) \text{ and } B_1 := V(G_{b_1}[B]).
\end{equation}
Write
\begin{equation}\label{t3.4:e2}
C_1 := A_1 \cup B_1.
\end{equation}
Then the following statements are equivalent:
\begin{enumerate}
\item \label{t3.4:s1} The pair \((a_1, b_1)\) belongs to \(\mathcal{B}_{path}(G)\).
\item \label{t3.4:s2} The subgraph \(G[C_1]\) induced in \(G\) by the set \(C_1\) is connected.
\item \label{t3.4:s3} The inclusion \(A_1 \times B_1 \subseteq \mathcal{B}_{path}(G)\) holds.
\end{enumerate}
\end{theorem}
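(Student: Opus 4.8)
The plan is to prove the cyclic chain of implications \ref{t3.4:s1} $\Rightarrow$ \ref{t3.4:s2} $\Rightarrow$ \ref{t3.4:s3} $\Rightarrow$ \ref{t3.4:s1}. The last one is immediate: since $a_1 \in A_1$ and $b_1 \in B_1$, statement \ref{t3.4:s3} gives $(a_1, b_1) \in A_1 \times B_1 \subseteq \mathcal{B}_{path}(G)$, which is \ref{t3.4:s1}. Both substantial implications rest on one structural remark about be-paths: since $A \cap B = \varnothing$ and, by Definition~\ref{d3.6}, a be-path has exactly one edge joining $A$ to $B$, every be-path of $A$ and $B$ splits into an initial sub-path lying entirely in $A$, a single ``crossing'' edge, and a terminal sub-path lying entirely in $B$; in particular its two endpoints lie one in $A$ and one in $B$. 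I will also use that a connected component of a graph is an \emph{induced} connected subgraph (this follows from \eqref{e2.5}, since adjoining to it an edge between two of its vertices would enlarge it while keeping it connected), so that $G_{a_1}[A]$ is the induced connected subgraph $G[A_1]$ and $G_{b_1}[B] = G[B_1]$; and the elementary fact that any walk joining two distinct vertices contains a path joining them whose edges are edges of the walk. Lemmas~\ref{l2.9} and~\ref{l2.10} are the glue.

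For \ref{t3.4:s1} $\Rightarrow$ \ref{t3.4:s2}: take a be-path $P \subseteq G$ joining $a_1$ and $b_1$, and write it as its $A$-part $Q_A$ (starting at $a_1$), its crossing edge, and its $B$-part $Q_B$ (ending at $b_1$). Since $Q_A$ is a path in $G[A]$ through $a_1$, all its vertices lie in the component $A_1$, and likewise $V(Q_B) \subseteq B_1$; hence $V(P) \subseteq A_1 \cup B_1 = C_1$, and as every edge of $P$ joins vertices of $C_1$ we get $P \subseteq G[C_1]$. Now $G[A_1] = G_{a_1}[A]$, $G[B_1] = G_{b_1}[B]$ and $P$ are connected subgraphs of $G[C_1]$ with $a_1 \in V(G[A_1]) \cap V(P)$ and $b_1 \in V(P) \cap V(G[B_1])$, so two applications of Lemma~\ref{l2.9} show their union $\Gamma$ is connected. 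Since $V(\Gamma) = A_1 \cup B_1 \cup V(P) = C_1 = V(G[C_1])$ and $\Gamma \subseteq G[C_1]$, Lemma~\ref{l2.10} yields that $G[C_1]$ is connected.

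For \ref{t3.4:s2} $\Rightarrow$ \ref{t3.4:s3}: assume $G[C_1]$ is connected and fix $(a, b) \in A_1 \times B_1$. Since $C_1$ is partitioned into the nonempty sets $A_1 = A \cap C_1$ and $B_1 = B \cap C_1$, a path in $G[C_1]$ from $a_1$ to $b_1$ must use an edge $\{a_0, b_0\} \in E(G)$ with $a_0 \in A_1$ and $b_0 \in B_1$. Using the connectedness of $G[A_1] = G_{a_1}[A]$ choose a path $P_A \subseteq G[A]$ from $a$ to $a_0$, and using that of $G[B_1]$ choose a path $P_B \subseteq G[B]$ from $b_0$ to $b$. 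Concatenating $P_A$, the edge $\{a_0, b_0\}$ and $P_B$ produces a walk $W$ from $a$ to $b$ of the shape (a block of vertices of $A$)(a block of vertices of $B$) joined by a single edge. Extract a path $Q$ from $W$ by deleting closed sub-walks (shortcutting at repeated vertices): because $A \cap B = \varnothing$, every repeated vertex of $W$ is a pair of $A$-vertices or a pair of $B$-vertices, so each deletion removes vertices from only one block and never removes an endpoint of the crossing edge. Hence $Q$ retains the form (block in $A$)(block in $B$) with a single crossing edge, i.e.\ $Q$ is a be-path of $A$ and $B$ joining $a$ and $b$, and all edges of $Q$ lie in $G$; thus $(a, b) \in \mathcal{B}_{path}(G)$.

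The delicate step, which I would write out in full, is the be-path extraction in \ref{t3.4:s2} $\Rightarrow$ \ref{t3.4:s3}: one has to check that turning the walk $W$ into a path by cycle-deletion preserves the property of having exactly one crossing edge. This is precisely where disjointness of $A$ and $B$ enters, since a deleted closed sub-walk stays on one side and hence contains no crossing edge. The rest (that connected components are induced and connected, the three-piece gluing in \ref{t3.4:s1} $\Rightarrow$ \ref{t3.4:s2}, and the existence of the edge $\{a_0, b_0\}$) is routine given Lemmas~\ref{l2.9}, \ref{l2.10} and \eqref{e2.5}.
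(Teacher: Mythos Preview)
Your proof is correct and follows essentially the same cyclic argument as the paper: the same gluing via Lemmas~\ref{l2.9}--\ref{l2.10} for \ref{t3.4:s1} $\Rightarrow$ \ref{t3.4:s2}, and the same crossing-edge-plus-component-paths construction for \ref{t3.4:s2} $\Rightarrow$ \ref{t3.4:s3}. The one place you work harder than necessary is the walk-to-path extraction in \ref{t3.4:s2} $\Rightarrow$ \ref{t3.4:s3}: since $V(P_A) \subseteq A$, $V(P_B) \subseteq B$, and $A \cap B = \varnothing$, the concatenation $W$ is already a path with a single crossing edge, so no shortcutting is needed --- this is exactly how the paper handles it, treating the degenerate cases $a = a_0$ or $b = b_0$ separately.
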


\begin{proof}
\(\ref{t3.4:s1} \Rightarrow \ref{t3.4:s2}\). Suppose that \ref{t3.4:s1} holds,
\begin{equation}\label{t3.4:e3}
(a_1, b_1) \in \mathcal{B}_{path}(G).
\end{equation}
Then there is a be-path \(P_{A, B}\) joining \(a_1 \in A\) and \(b_1 \in B\). By Lemma~\ref{l2.9}, the union \(P_{A, B} \cup G_{a_1}[A]\) is connected because \(a_1 \in V(P_{A, B}) \cap V(G_{a_1}[A])\) and the graphs \(P_{A, B}\) and \(G_{a_1}[A]\) are connected. Similarly, the union \(P_{A, B} \cup G_{b_1}[B]\) is also connected. Let us consider a graph
\begin{equation}\label{t3.4:e4}
H_1 := P_{A, B} \cup G_{a_1}[A] \cup G_{b_1}[B].
\end{equation}
Using Lemma~\ref{l2.9} again we see that \(H_1\) is connected. By Lemma~\ref{l2.10}, to complete the proof of statement~\ref{t3.4:s2} it suffices to show that
\begin{equation}\label{t3.4:e5}
H_1 \subseteq G[C_1]
\end{equation}
and
\begin{equation}\label{t3.4:e6}
V(H_1) = V(G[C_1]).
\end{equation}

It follows from \eqref{t3.4:e1}, \eqref{t3.4:e2} and \eqref{t3.4:e4} that
\[
V(H_1) \supseteq A_1 \cup B_1 = V(G[C_1]).
\]
Consequently, \eqref{t3.4:e6} holds if we have \eqref{t3.4:e5}. For proof \eqref{t3.4:e5}, we note that \(G_{a_1}[A_1]\) and \(G_{b_1}[B_1]\) are subgraphs of \(G[C_1]\)
\begin{equation}\label{t3.4:e7}
G_{a_1}[A_1] \subseteq G[C_1] \text{ and } G_{b_1}[B_1] \subseteq G[C_1].
\end{equation}
Hence, \eqref{t3.4:e5} holds if
\begin{equation}\label{t3.4:e8}
P_{A, B} \subseteq G[C_1].
\end{equation}
Let us prove the last inclusion.

By Definition~\ref{d3.6}, we can find points \(a_0 \in A\), \(b_0 \in B\) and connected graphs \(P_A \subseteq G[A]\), \(P_B \subseteq G[B]\) such that
\begin{equation}\label{t3.4:e9}
a_0, a_1 \in V(P_A), \quad b_0, b_1 \in V(P_B)
\end{equation}
and
\begin{equation}\label{t3.4:e10}
P_{A, B} = P_A \cup P_B \cup P^{0},
\end{equation}
where \(P^{0}\) is a path defined by
\begin{equation}\label{t3.4:e11}
V(P^{0}) := \{a_0, b_0\}, \quad E(P^{0}) := \{\{a_0, b_0\}\}.
\end{equation}
It follows from \eqref{t3.4:e9}--\eqref{t3.4:e11} that \eqref{t3.4:e8} holds whenever
\begin{equation}\label{t3.4:e12}
P_A \subseteq G[C_1] \text{ and } P_B \subseteq G[C_1].
\end{equation}
Lemma~\ref{l2.11} and \eqref{t3.4:e9} imply
\begin{equation}\label{t3.4:e13}
P_A \subseteq G_{a_1}[A] \text{ and } P_B \subseteq G_{b_1}[B].
\end{equation}
Now \eqref{t3.4:e12} follows from \eqref{t3.4:e13} and \eqref{t3.4:e7}.

\(\ref{t3.4:s2} \Rightarrow \ref{t3.4:s3}\). Let \(G[C_1]\) be a connected graph. Then there are points \(a_0 \in A_1\) and \(b_0 \in B_1\) such that
\[
\{a_0, b_0\} \in E(G[C_1]).
\]
It is clear that the graph \(P^0\) defined by \eqref{t3.4:e11} is a be-path of \(A\) and \(B\) joining \(a_0\) and \(b_0\). Hence, we have \((a_0, b_0) \in \mathcal{B}_{path}(G)\).

Let us consider now an arbitrary \(a \in A_1\) and \(b \in B_1\). If \(a \neq a_0\) and \(b \neq b_0\), then there are a path \(P_a\) in \(A_1\) joining \(a\) and \(a_0\), and a path \(P_b\) in \(B_1\) joining \(b\) and \(b_0\). Then the union \(P_a \cup P^{0} \cup P_b\) is a be-path of \(A\) and \(B\) joining \(a\) and \(b\). For the case when \(a = a_0\) or \(b = b_0\), the desired be-path of \(A\) and \(B\) can be constructed similarly.

\(\ref{t3.4:s3} \Rightarrow \ref{t3.4:s1}\). Let \ref{t3.4:s3} hold. Then \ref{t3.4:s1} follows from \((a_1, b_1) \in A_1 \times B_1\) and \(A_1 \times B_1 \subseteq \mathcal{B}_{path}(G)\).
\end{proof}

\begin{remark}\label{r3.5}
Let a graph $G$ be path-bipartite for \(A\) and \(B\). An element $(a,b)$ of $\mathcal{B}_{path}(G)$ can have several be-paths joining $a$ and $b$ in $G$.
\end{remark}

\begin{remark}\label{r3.7}
Let $G$ be a path-bipartite graph of \(A\) and \(B\). If $G$ is path complete, then $G$ is connected, but not vice versa, in general (see Example~\ref{ex3.7} below).
\end{remark}

\begin{example}\label{ex3.7}
Let \(P = (a_1, b_1, a_2, b_2)\) be a path (see Figure~\ref{fig3}).
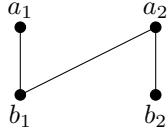
\begin{figure}[htb]
\begin{tikzpicture}
\def\xx{1.8cm}
\def\yy{0.9cm}

\coordinate [label=above:{$a_{1}$}] (a1) at (0*\xx, 0);
\coordinate [label=below:{$b_{1}$}] (b1) at (0*\xx, -\yy);
\coordinate [label=above:{$a_{2}$}] (a2) at (1*\xx, 0);
\coordinate [label=below:{$b_{2}$}] (b2) at (1*\xx, -\yy);

\draw [fill, black] (a1) circle (2pt);
\draw [fill, black] (b1) circle (2pt);
\draw [fill, black] (a2) circle (2pt);
\draw [fill, black] (b2) circle (2pt);
\draw (a1) -- (b1) -- (a2) -- (b2);
\end{tikzpicture}
\caption{The path \(P\) is a path-bipartite graph of sets \(A = \{a_1, a_2\}\) and \(B =  \{b_1, b_2\}\).}
\label{fig3}
\end{figure}
Then \(P\) is a connected path-bipartite graph of the sets \(A = \{a_1, a_2\}\) and \(B =  \{b_1, b_2\}\), but \((a_1, b_2) \notin \mathcal{B}_{path}(P)\).
\end{example}

Let us consider a graph \(G = G(A, B)\) with \(V(G) = A \cup B\), where \(A\) and \(B\) are disjoint nonempty sets, and denote by
\[
\mathbf{A} := \{A^i \colon i \in I\} \quad (\mathbf{B} := \{B^j \colon j \in J\})
\]
the set of all connected components of the induced graph \(G[A]\) (\(G[B]\)).

Now we define a bipartite graph \(\mathbf{G} = \mathbf{G}(\mathbf{A}, \mathbf{B})\) by the rules \(V(\mathbf{G}) := \mathbf{A} \cup \mathbf{B}\) and, for \(i_1 \in I\) and \(j_1 \in J\), \(\{A^{i_1}, B^{j_1}\} \in E(\mathbf{G})\) holds iff there are \(a_1 \in A^{i_1}\) and \(b_1 \in B^{j_1}\) such that
\begin{equation}\label{e3.16}
(a_1, b_1) \in \mathcal{B}_{path}(G).
\end{equation}

\begin{theorem}\label{t3.6}
Let \(G\) be a path-bipartite graph of sets \(A\) and \(B\). Then the following statements are equivalent:
\begin{enumerate}
\item\label{t3.6:s1} \(G\) is path-complete.
\item\label{t3.6:s2} \(\mathbf{G}(\mathbf{A}, \mathbf{B})\) is complete bipartite.
\end{enumerate}
\end{theorem}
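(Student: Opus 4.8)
The plan is to prove the equivalence by exploiting Theorem~\ref{t3.4}, which already translates ``\((a,b) \in \mathcal{B}_{path}(G)\)'' into a statement about connected components of \(G[A]\) and \(G[B]\). Recall that \(\mathbf{G}(\mathbf{A},\mathbf{B})\) has an edge \(\{A^{i_1},B^{j_1}\}\) precisely when \emph{some} \(a_1 \in A^{i_1}\), \(b_1 \in B^{j_1}\) satisfy \((a_1,b_1) \in \mathcal{B}_{path}(G)\); and ``\(G\) path-complete'' means \(\mathcal{B}_{path}(G) = A \times B\).

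For the implication \(\ref{t3.6:s1} \Rightarrow \ref{t3.6:s2}\): assume \(\mathcal{B}_{path}(G) = A \times B\). Given any \(A^{i_1} \in \mathbf{A}\) and \(B^{j_1} \in \mathbf{B}\), pick any \(a_1 \in A^{i_1}\) and \(b_1 \in B^{j_1}\) (both parts of \(\mathbf{A}\), \(\mathbf{B}\) consist of nonempty components, so such points exist). Since \((a_1,b_1) \in A \times B = \mathcal{B}_{path}(G)\), the defining rule for \(\mathbf{G}\) immediately gives \(\{A^{i_1},B^{j_1}\} \in E(\mathbf{G})\). As \(i_1\), \(j_1\) were arbitrary, \(\mathbf{G}(\mathbf{A},\mathbf{B})\) is complete bipartite. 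This direction is essentially bookkeeping.

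For the converse \(\ref{t3.6:s2} \Rightarrow \ref{t3.6:s1}\): assume \(\mathbf{G}(\mathbf{A},\mathbf{B})\) is complete bipartite, and let \((a,b) \in A \times B\) be arbitrary; we must show \((a,b) \in \mathcal{B}_{path}(G)\). Let \(A^{i_1}\) be the connected component of \(G[A]\) containing \(a\) and \(B^{j_1}\) the connected component of \(G[B]\) containing \(b\); in the notation of Theorem~\ref{t3.4}, \(A_1 = V(G_a[A]) = A^{i_1}\) and \(B_1 = V(G_b[B]) = B^{j_1}\). Since \(\mathbf{G}\) is complete bipartite, \(\{A^{i_1},B^{j_1}\} \in E(\mathbf{G})\), so by the definition of \(E(\mathbf{G})\) there exist \(a_1 \in A^{i_1}\) and \(b_1 \in B^{j_1}\) with \((a_1,b_1) \in \mathcal{B}_{path}(G)\). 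Now apply Theorem~\ref{t3.4} with this pair \((a_1,b_1)\): since \(a_1\) lies in the same component of \(G[A]\) as \(a\) and \(b_1\) lies in the same component of \(G[B]\) as \(b\), the sets \(A_1\), \(B_1\), \(C_1\) attached to \((a_1,b_1)\) coincide with those attached to \((a,b)\). Statement~\ref{t3.4:s1} holds for \((a_1,b_1)\), hence by the equivalence in Theorem~\ref{t3.4} so does statement~\ref{t3.4:s3}, namely \(A_1 \times B_1 \subseteq \mathcal{B}_{path}(G)\). Since \((a,b) \in A_1 \times B_1\), we conclude \((a,b) \in \mathcal{B}_{path}(G)\), as desired.

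The only subtlety — and the step I would be most careful about — is the observation that the components \(A^{i_1}\), \(B^{j_1}\) (equivalently, the sets \(A_1\), \(B_1\)) are genuinely determined by \emph{which component} a point lies in, not by the particular point; so the ``\(A_1 \times B_1\)'' produced by Theorem~\ref{t3.4} for the witnessing pair \((a_1,b_1)\) is the same rectangle that contains \((a,b)\). This uses that two vertices in the same connected component of \(G[A]\) determine the same component (Lemma~\ref{l2.11}, or simply uniqueness of components). Everything else is a direct unwinding of the definition of \(\mathbf{G}(\mathbf{A},\mathbf{B})\) together with Theorem~\ref{t3.4}, so no serious obstacle is expected.
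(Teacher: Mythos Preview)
Your proof is correct and follows essentially the same approach as the paper: both directions unwind the definition of \(\mathbf{G}(\mathbf{A},\mathbf{B})\), and for \(\ref{t3.6:s2}\Rightarrow\ref{t3.6:s1}\) both use the witnessing pair \((a_1,b_1)\) from an edge of \(\mathbf{G}\) together with the implication \(\ref{t3.4:s1}\Rightarrow\ref{t3.4:s3}\) of Theorem~\ref{t3.4} to cover the rectangle \(A^{i_1}\times B^{j_1}\). The only cosmetic difference is that the paper phrases the converse via the decomposition \(A\times B=\bigcup_{i,j}(A^i\times B^j)\) rather than pointwise, but the content is identical.
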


\begin{proof}
\(\ref{t3.6:s1} \Rightarrow \ref{t3.6:s2}\). Let \(G\) be path-complete. We must show that \(\mathbf{G}(\mathbf{A}, \mathbf{B})\) is complete bipartite. It is valid iff, for all \(i_1 \in I\) and \(j_1 \in J\), there are \(a_1 \in A^{i_1}\) and \(b_1 \in B^{j_1}\) such that \eqref{e3.16} holds. Since \(G\) is path-complete, \((a, b) \in \mathcal{B}_{path}(G)\) holds for all \(a \in A\) and \(b \in B\), in particular, \eqref{e3.16} is also valid for every \(a_1 \in A^{i_1}\) and \(b_1 \in B^{j_1}\).

\(\ref{t3.6:s2} \Rightarrow \ref{t3.6:s1}\). Let \(\mathbf{G}(\mathbf{A}, \mathbf{B})\) be a complete bipartite graph. The equalities
\[
A \times B = \left(\bigcup_{i\in I} A^i\right) \times \left(\bigcup_{j\in J} B^j\right) = \bigcup_{i\in I, j\in J} (A^i \times B^j)
\]
imply that \(G\) is path-complete iff the inclusion
\begin{equation}\label{t3.6:e1}
A^i \times B^j \subseteq \mathcal{B}_{path}(G)
\end{equation}
holds for all \(i\in I\) and \(j\in J\).

Let us consider arbitrary \(i_1 \in I\) and \(j_1 \in J\). Since \(\mathbf{G}(\mathbf{A}, \mathbf{B})\) is complete bipartite, we can find \(a_1 \in A^{i}\) and \(b_1 \in B^{j}\) such that \((a_1, b_1) \in \mathcal{B}_{path}(G)\). Now inclusion \eqref{t3.6:e1} follows from Theorem~\ref{t3.4} with \(A^i = A_1\) and \(B^j = B_1\).
\end{proof}

\begin{corollary}\label{c2.9}
Let \(G = G(A, B)\) be a path-bipartite graph of the sets \(A\) and \(B\). If at least one from the sets \(A\), \(B\) has exactly one point,
\begin{equation}\label{c2.9:e1}
\min\{|A|, |B|\} \leqslant 1,
\end{equation}
then the following statements are equivalent:
\begin{enumerate}
\item\label{c2.9:s1} \(G\) is connected.
\item\label{c2.9:s2} \(G\) is path-complete.
\end{enumerate}
\end{corollary}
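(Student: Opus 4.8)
The implication $\ref{c2.9:s2}\Rightarrow\ref{c2.9:s1}$ is already recorded in Remark~\ref{r3.7}, so the plan is to concentrate on $\ref{c2.9:s1}\Rightarrow\ref{c2.9:s2}$. Since the sets $A$ and $B$ are nonempty and disjoint, condition~\eqref{c2.9:e1} forces one of them to be a singleton; interchanging the roles of $A$ and $B$ if necessary, I would assume $B=\{b\}$. By the very definition~\eqref{e3.2} we always have $\mathcal{B}_{path}(G)\subseteq A\times B=A\times\{b\}$, so to prove that $G$ is path-complete it is enough to exhibit, for each $a\in A$, a be-path of $A$ and $B$ joining $a$ and $b$.

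Fix $a\in A$. First I would invoke Proposition~\ref{p3.2} to get $V(G)=A\cup\{b\}$, and then use the hypothesis that $G$ is connected together with $a\neq b$ to obtain a path $P=(u_0,u_1,\dots,u_k)$, $k\geqslant 1$, in $G$ with $u_0=a$ and $u_k=b$. The one genuine point of the argument is the observation that this $P$ is automatically a be-path: its vertices are pairwise distinct and all lie in $A\cup\{b\}$, and since $u_k=b$ is the only one of them that can belong to $B$, the vertices $u_0,\dots,u_{k-1}$ all lie in $A$; hence $\{u_{k-1},u_k\}$ is the unique edge of $P$ that meets $B$, and it meets both $A$ and $B$. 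Thus $P$ is a be-path of $A$ and $B$ joining $a$ and $b$, so $(a,b)\in\mathcal{B}_{path}(G)$. As $a\in A$ is arbitrary, $\mathcal{B}_{path}(G)=A\times\{b\}=A\times B$ and $G$ is path-complete.

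I do not expect a real obstacle here: the content is precisely that when one part is a single vertex every $A$--$B$ path has only one ``crossing'' edge, so the distinction between connectedness and path-completeness collapses. As an alternative bookkeeping, one could route the argument through Theorem~\ref{t3.6}, noting that $\mathbf{B}$ then consists of the single component $\{b\}$, so $\mathbf{G}(\mathbf{A},\mathbf{B})$ is complete bipartite iff every connected component of $G[A]$ contains a vertex $a_1$ with $(a_1,b)\in\mathcal{B}_{path}(G)$, which follows from connectedness of $G$ via Theorem~\ref{t3.4}; but the direct construction of $P$ above is shorter and I would use that.
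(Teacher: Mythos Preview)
Your proposal is correct and follows essentially the same route as the paper's own proof: after disposing of $\ref{c2.9:s2}\Rightarrow\ref{c2.9:s1}$, the paper also assumes without loss of generality that one part is a singleton (it takes $A=\{a_1\}$ instead of your $B=\{b\}$), invokes Proposition~\ref{p3.2} to identify the other part with $V(G)\setminus\{a_1\}$, and then observes that any path in $G$ joining $a_1$ to a point of $B$ is automatically a be-path. Your write-up spells out the ``unique crossing edge'' verification a bit more explicitly, but the argument is the same.
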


This corollary follows from Theorems~\ref{t3.4} and \ref{t3.6}, but, for simplicity, we give below a direct elementary proof.

\begin{proof}[Proof of Corollary~\ref{c2.9}]
The implication \(\ref{c2.9:s2} \Rightarrow \ref{c2.9:s1}\) is evidently valid. Let us prove the validity of \(\ref{c2.9:s1} \Rightarrow \ref{c2.9:s2}\).

Let \(G\) be connected. Without loss of generality, suppose that \(A = \{a_1\}\), where \(a_1\) is the unique point of \(A\). Then \(B = V(G) \setminus \{a_1\}\) holds by Proposition~\ref{p3.2}. Hence, every path joining \(a_1\) with an arbitrary \(b \in B\) is a be-path of \(A\) and \(B\) by Definition~\ref{d3.6}. Now \eqref{e3.2} implies that \(G\) is path-complete.
\end{proof}

\begin{remark}\label{r2.10}
Example~\ref{ex3.7} shows the number \(1\) is the best possible integer number for inequality~\eqref{c2.9:e1}.
\end{remark}

\begin{theorem}\label{t3.9}
Let \(G\) be a graph, \(A\) and \(B\) are disjoint nonempty subsets of \(V(G)\), and \(\{G_i \colon i \in I\}\) be the set of all connected components of \(G\), and let \(\mathbf{P}_{A, B}\) be the set of all be-paths \(P\) of \(A\) and \(B\) which are subgraphs of \(G\), \(P \subseteq G\). Then the following statements are equivalent:
\begin{enumerate}
\item \label{t3.9:s1} The equality
\begin{equation}\label{t3.9:e1}
G = \bigcup_{P \in \mathbf{P}_{A, B}} P
\end{equation}
holds, i.e., \(G\) is a path-bipartite graph of \(A\) and \(B\).
\item \label{t3.9:s2} We have
\begin{equation}\label{t3.9:e2}
V(G) = A \cup B
\end{equation}
and
\begin{equation}\label{t3.9:e3}
A \cap V(G_i) \neq \varnothing \neq B \cap V(G_i)
\end{equation}
for every \(i \in I\).
\end{enumerate}
\end{theorem}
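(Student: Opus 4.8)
The plan is to prove the two implications of the equivalence separately, arguing throughout at the level of vertices and edges and exploiting the decomposition of a be-path \(P\) joining \(a \in A\) and \(b \in B\) into an ``\(A\)-side'' path \(P_A \subseteq G[A]\), a ``\(B\)-side'' path \(P_B \subseteq G[B]\), and the single crossing edge \(\{a_0, b_0\}\), exactly as extracted in the proof of Theorem~\ref{t3.4}.

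For \(\ref{t3.9:s1} \Rightarrow \ref{t3.9:s2}\): statement~\ref{t3.9:s1} forces \(\mathbf{P}_{A, B} \neq \varnothing\) (an empty union would have empty vertex set, contradicting \(\varnothing \neq A \cup B \subseteq V(G)\)), so \(G\) is genuinely path-bipartite of \(A\) and \(B\) and \eqref{t3.9:e2} follows from Proposition~\ref{p3.2}. To get \eqref{t3.9:e3}, I would first note that \(G\) has no isolated vertices: each \(v \in V(G)\) lies in some \(P \in \mathbf{P}_{A, B}\), and a be-path is a connected graph on at least two vertices, so \(v\) has a neighbour. Now fix a connected component \(G_i\) and \(v \in V(G_i)\); by \eqref{t3.9:e2} we may assume \(v \in A\) (the case \(v \in B\) is symmetric) and must produce a point of \(B\) in \(V(G_i)\). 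Take an edge \(e\) of \(G\) incident to \(v\); then \(e \in E(P)\) for some be-path \(P \subseteq G\). If \(e\) joins a vertex of \(A\) to a vertex of \(B\) it must be the unique crossing edge of \(P\), and one of its ends is already in \(B \cap V(G_i)\). Otherwise, since \(v \in A\), both ends of \(e\) lie in \(A\), so \(e\) lies on the \(A\)-side path \(P_A\) of \(P\); then \(P_A\) is a connected subgraph of \(G\) containing both \(v\) and the \(A\)-end \(a_0\) of the crossing edge \(\{a_0, b_0\}\) of \(P\), whence \(a_0 \in V(G_i)\) and, as \(\{a_0, b_0\} \in E(G)\), also \(b_0 \in V(G_i)\) with \(b_0 \in B\). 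In either case \(G_i\) meets \(B\), and symmetrically it meets \(A\).

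For \(\ref{t3.9:s2} \Rightarrow \ref{t3.9:s1}\): the inclusion \(\bigcup_{P \in \mathbf{P}_{A, B}} P \subseteq G\) is immediate, so it remains to cover every vertex and every edge of \(G\) by be-paths contained in \(G\). Since each component of \(G\) meets the two disjoint sets \(A\) and \(B\), no component is a single vertex, hence \(G\) has no isolated vertices and it is enough to cover the edges. If \(e = \{x, y\}\) with \(x \in A\), \(y \in B\), then the one-edge path \((x, y)\) is a be-path \(\subseteq G\) through \(e\). If \(x, y \in A\) (the case \(x, y \in B\) being symmetric), let \(A'\) be the connected component of \(G[A]\) containing \(x\) and \(y\); there must be an edge \(\{a_0, b_0\} \in E(G)\) with \(a_0 \in A'\) and \(b_0 \in B\), for otherwise \(A'\) would be a connected component of \(G\) contained in \(A\), violating \eqref{t3.9:e3}. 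Using the elementary fact that in a connected graph a prescribed edge is traversed by some simple path starting at any prescribed vertex (take a shortest \(a_0\)--\(x\) path in \(G[A']\); if it avoids \(y\), append the edge \(\{x, y\}\), otherwise it already uses \(\{x, y\}\)), I obtain a path \(P_A \subseteq G[A]\) with \(a_0\) an endpoint and \(\{x, y\} \in E(P_A)\); attaching \(b_0\) to the \(a_0\)-end through \(\{a_0, b_0\}\) produces a be-path \(P \subseteq G\) whose unique crossing edge is \(\{a_0, b_0\}\) and which contains \(e\). This covers all edges, hence \(G = \bigcup_{P \in \mathbf{P}_{A, B}} P\).

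The inclusion direction, the observation that a single cross-edge is a be-path, and the final assembly of \eqref{t3.9:e1} from the vertex and edge covers are routine. The delicate point, in both directions, is the handling of an edge lying entirely inside \(A\) (or inside \(B\)): in \(\ref{t3.9:s1} \Rightarrow \ref{t3.9:s2}\) this is precisely where condition \eqref{t3.9:e3} is produced, and in \(\ref{t3.9:s2} \Rightarrow \ref{t3.9:s1}\) it is where \eqref{t3.9:e3} is used, through the two steps of (a) locating a crossing edge reachable within the \(G[A]\)-component of the edge and (b) routing an honest simple path through the designated edge to that crossing edge. I expect step (b) — insisting on a genuine path rather than merely a connected subgraph — to be the main technical nuisance, though the shortest-path trick above disposes of it.
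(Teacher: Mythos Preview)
Your proof is correct and follows essentially the same strategy as the paper's. The differences are only presentational: for \ref{t3.9:s1}\(\Rightarrow\)\ref{t3.9:s2} the paper argues by contradiction (a component contained entirely in \(A\) would trap a be-path in \(A\), contradicting Definition~\ref{d3.6}), and for \ref{t3.9:s2}\(\Rightarrow\)\ref{t3.9:s1} the paper takes the shortest path from \(\{x,y\}\) to the opposite part directly inside the ambient component \(G_{i}\) (so the crossing edge emerges as the last edge of that shortest path) rather than first isolating a crossing edge out of the \(G[A]\)-component and then routing within \(G[A]\) as you do.
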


\begin{proof}
\(\ref{t3.9:s1} \Rightarrow \ref{t3.9:s2}\). Equality \eqref{t3.9:e2} follows from Proposition~\ref{p3.2}. To prove \eqref{t3.9:e3}, suppose contrary that there is \(i_0 \in I\) such that
\[
A \cap V(G_i) = \varnothing \text{ or } B \cap V(G_i) = \varnothing.
\]
Without loss of generality, we can assume that \(B \cap V(G_{i_0}) = \varnothing\) holds. Then, using the last equality and \eqref{t3.9:e2}, we obtain
\begin{equation}\label{t3.9:e6}
V(G_{i_0}) \subseteq A.
\end{equation}
Equality \eqref{t3.9:e1} implies that there is a be-path \(P^{0} \in \mathbf{P}_{A, B}\) such that \(a_{i_0} \in V(P^{0})\). The be-path \(P^{0}\) is a connected subgraph of \(G\). Consequently, the inclusion
\begin{equation}\label{t3.9:e7}
P^{0} \subseteq G_{i_0}
\end{equation}
holds by Lemma~\ref{l2.11}. In particular, from \eqref{t3.9:e6} and \eqref{t3.9:e7} it follows that
\[
V(P^{0}) \subseteq A,
\]
contrary to Definition~\ref{d3.6}. Thus, \eqref{t3.9:e3} holds for every \(i \in I\).

\(\ref{t3.9:s2} \Rightarrow \ref{t3.9:s1}\). Let \ref{t3.9:s2} hold. We must show that equality \eqref{t3.9:e1} is valid. Let us do it.

First of all, we note that
\[
G \supseteq \bigcup_{P \in \mathbf{P}_{A, B}} P
\]
holds. Hence, \eqref{t3.9:e1} holds iff
\[
G \subseteq \bigcup_{P \in \mathbf{P}_{A, B}} P.
\]
The last inclusion means that
\begin{align}\label{t3.9:e8}
E(G) \subseteq \bigcup_{P \in \mathbf{P}_{A, B}} E(P),\\
\label{t3.9:e9}
V(G) \subseteq \bigcup_{P \in \mathbf{P}_{A, B}} V(P).
\end{align}
Using~\eqref{t3.9:e3}, we see that every connected component \(G_i\) of \(G\) contains at least two distinct vertices and, consequently,
\[
V(G_i) = \bigcup_{\{u, v\} \in E(G_i)} \{u, v\}
\]
holds. The last equality and Lemma~\ref{l2.8} give us
\[
V(G) = \bigcup_{i \in I} V(G_i) = \bigcup_{\{u, v\} \in E(G)} \{u, v\}.
\]
Hence, \eqref{t3.9:e9} follows from \eqref{t3.9:e8}. To prove \eqref{t3.9:e8} it suffices to show that every edge of \(G_i\), \(i \in I\), is also an edge of a be-path \(P \in \mathbf{P}_{A, B}\), i.e.,
\begin{equation}\label{t3.9:e10}
E(G_i) \subseteq \bigcup_{P \in \mathbf{P}_{A, B}} E(P)
\end{equation}
holds for every \(i \in I\).

Let \(i_1\) be an arbitrary index of the family \(I\) and let
\begin{equation}\label{t3.9:e11}
\{x, y\} \in E(G_{i_1}).
\end{equation}
We must find \(P \in \mathbf{P}_{A, B}\) such that
\begin{equation}\label{t3.9:e12}
\{x, y\} \in E(P).
\end{equation}
If we have
\begin{equation}\label{t3.9:e13}
A \cap \{x, y\} \neq \varnothing \neq B \cap \{x, y\},
\end{equation}
then the path \(P_{i_1}\) defined by
\begin{equation}\label{t3.9:e14}
V(P_{i_1}) = \{x, y\}, \quad E(P_{i_1}) = \{\{x, y\}\}
\end{equation}
belongs to \(\mathbf{P}_{A, B}\).

If \eqref{t3.9:e13} is not satisfied, then, without loss of generality, we can assume that
\[
\{x, y\} \subseteq V(G_1) \cap B.
\]
Let us denote by \(P^{1} = (v_1, \ldots, v_n)\) the shortest part joining the set \(\{x, y\}\) with the set \(V(G_1) \cap A\) in \(G_1\). Then we have either \(v_1 = x\), \(v_n \in A\) and \(v_i \in B \setminus \{x, y\}\) for every \(i \in \{2, \ldots, n-1\}\) or \(v_1 = y\), \(v_n \in A\) and \(v_i \in B \setminus \{x, y\}\) for every \(i \in \{2, \ldots, n-1\}\). In each of the cases, it is easy to prove that
\[
P^{1} \cup P_{i_1} \in \mathbf{P}_{A, B},
\]
where \(P_{i_1}\) is a path defined by \eqref{t3.9:e14}.
\end{proof}

Theorem~\ref{t3.9} implies the following corollaries.

\begin{corollary}\label{c3.10}
The following statements are equivalent for every graph \(G\):
\begin{enumerate}
\item\label{c3.10:s1} There are disjoint nonempty subsets \(A\) and \(B\) of the vertex set \(V(G)\) such that \(G\) is a path-bipartite graph of the sets \(A\) and \(B\).
\item\label{c3.10:s2} The equality \(G = G'\) holds.
\end{enumerate}
\end{corollary}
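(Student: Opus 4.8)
The plan is to obtain the corollary as a direct consequence of Theorem~\ref{t3.9}. First note that, for a graph $G$ (whose vertex set is nonvoid by convention), the equality $G = G'$ holds if and only if $G$ has no isolated vertices: indeed $E(G') = E(G)$ always, while $V(G')$ is by definition the set of non-isolated vertices of $G$, so $V(G') = V(G)$ exactly when no vertex is isolated. Thus the corollary asserts that disjoint nonempty $A, B \subseteq V(G)$ making $G$ path-bipartite exist precisely when every vertex of $G$ has a neighbour, and Theorem~\ref{t3.9} is tailor-made to bridge these two conditions, since it rephrases ``path-bipartite for $(A,B)$'' as ``$V(G) = A \cup B$ and every connected component of $G$ meets both $A$ and $B$''.

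For \(\ref{c3.10:s1} \Rightarrow \ref{c3.10:s2}\): assume $G$ is a path-bipartite graph of disjoint nonempty $A$ and $B$. The implication \(\ref{t3.9:s1} \Rightarrow \ref{t3.9:s2}\) of Theorem~\ref{t3.9} gives $V(G) = A \cup B$ together with $A \cap V(G_i) \neq \varnothing \neq B \cap V(G_i)$ for every connected component $G_i$. As $A \cap B = \varnothing$, each $G_i$ then contains two distinct vertices. But an isolated vertex $v$ of $G$ would give a one-vertex subgraph on $\{v\}$ that is connected and maximal among connected subgraphs of $G$ containing $v$ (a strictly larger connected subgraph would produce an edge incident to $v$), hence a singleton connected component of $G$ — contradicting what we just showed. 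So $G$ has no isolated vertices, that is, $G = G'$. (One could equally argue directly: every vertex of $G$ lies on a be-path $P \subseteq G$, and no vertex of a path is isolated.)

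For \(\ref{c3.10:s2} \Rightarrow \ref{c3.10:s1}\): assume $G = G'$ and let $\{G_i \colon i \in I\}$ be the set of connected components of $G$. Since $V(G) \neq \varnothing$ and $G$ has no isolated vertex, $I \neq \varnothing$ and each $G_i$ is connected with at least two vertices, so $E(G_i) \neq \varnothing$; fix an edge $\{u_i, v_i\} \in E(G_i)$ for each $i$. Put $B := \{v_i \colon i \in I\}$ and $A := V(G) \setminus B$. By Lemma~\ref{l2.8} the components are pairwise vertex-disjoint, so no $u_i$ lies in $B$ (if $u_i = v_j$ then $j = i$ and $u_i = v_i$, impossible); hence $A$ and $B$ are nonempty, disjoint, $A \cup B = V(G)$, and $u_i \in A \cap V(G_i)$, $v_i \in B \cap V(G_i)$ for every $i$. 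Thus statement~\ref{t3.9:s2} of Theorem~\ref{t3.9} holds, and the implication \(\ref{t3.9:s2} \Rightarrow \ref{t3.9:s1}\) yields that $G$ is a path-bipartite graph of $A$ and $B$, which is~\ref{c3.10:s1}.

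The argument has no real obstacle once Theorem~\ref{t3.9} is in place; the only spots that call for a little care are recognizing that an isolated vertex is itself a (singleton) connected component — which is exactly what Theorem~\ref{t3.9}, statement~\ref{t3.9:s2}, forbids — and the routine verification that selecting one edge per connected component yields an honest two-part partition of $V(G)$.
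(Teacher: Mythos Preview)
Your proof is correct and follows exactly the route the paper indicates: the paper simply states that Corollary~\ref{c3.10} is implied by Theorem~\ref{t3.9} without spelling out details, and you have supplied those details faithfully. The only minor remark is that your parenthetical alternative for \(\ref{c3.10:s1} \Rightarrow \ref{c3.10:s2}\) (``every vertex lies on a be-path, hence is not isolated'') is in fact the more direct argument and could stand on its own without invoking Theorem~\ref{t3.9} for that direction.
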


\begin{corollary}\label{c2.13}
The following statements are equivalent for every graph \(G\):
\begin{enumerate}
\item\label{c2.13:s1} \(G\) is a path-bipartite graph of sets \(A\) and \(B\) whenever \(A\) and \(B\) are disjoint nonempty subsets of \(V(G)\) such that \(V(G) = A \cup B\).
\item\label{c2.13:s2} \(G\) is connected and the equality \(G' = G\) holds.
\end{enumerate}
\end{corollary}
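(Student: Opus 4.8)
The plan is to derive the corollary straight from Theorem~\ref{t3.9} and Corollary~\ref{c3.10}, using two elementary facts: a graph $G$ satisfies $G = G'$ exactly when it has no isolated vertices (so in particular $|V(G)| \geqslant 2$), and a connected graph is its own unique connected component. I would first prove $\ref{c2.13:s2} \Rightarrow \ref{c2.13:s1}$. Assume $G$ is connected with $G = G'$, and let $A$, $B$ be arbitrary disjoint nonempty subsets of $V(G)$ with $V(G) = A \cup B$. It suffices to check statement~\ref{t3.9:s2} of Theorem~\ref{t3.9} for this pair: condition~\eqref{t3.9:e2} is the hypothesis $V(G) = A \cup B$, and since $G$ is connected, $G$ is its own unique connected component, so \eqref{t3.9:e3} reduces to $A \cap V(G) \neq \varnothing \neq B \cap V(G)$, which holds because $A$ and $B$ are nonempty. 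By the implication $\ref{t3.9:s2} \Rightarrow \ref{t3.9:s1}$ of Theorem~\ref{t3.9}, $G$ is a path-bipartite graph of $A$ and $B$; as $(A, B)$ was arbitrary, \ref{c2.13:s1} follows. (Here $G = G'$ is used only to guarantee $|V(G)| \geqslant 2$, so that admissible pairs $(A, B)$ exist.)

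For $\ref{c2.13:s1} \Rightarrow \ref{c2.13:s2}$, pick any vertex $v$ and apply \ref{c2.13:s1} to the pair $A = \{v\}$, $B = V(G) \setminus \{v\}$; this exhibits $G$ as a path-bipartite graph for some disjoint nonempty pair, whence $G = G'$ by Corollary~\ref{c3.10}. To get connectedness I would argue by contradiction: if $G$ were disconnected, fix one connected component $G_1$ and set $A := V(G_1)$ and $B := V(G) \setminus V(G_1)$. These sets are disjoint, both nonempty (a component has a vertex, and there is a second component), and satisfy $A \cup B = V(G)$, so by \ref{c2.13:s1} the graph $G$ is path-bipartite of $A$ and $B$. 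Then the implication $\ref{t3.9:s1} \Rightarrow \ref{t3.9:s2}$ of Theorem~\ref{t3.9} forces $B \cap V(G_1) \neq \varnothing$, contradicting $B = V(G) \setminus V(G_1)$. Hence $G$ is connected, and \ref{c2.13:s2} holds.

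The argument is essentially bookkeeping on top of Theorem~\ref{t3.9}, so I do not expect a genuine obstacle; the one step carrying content is the choice of the ``witnessing'' bipartition $(V(G_1), V(G) \setminus V(G_1))$ in the disconnected case, designed precisely so that \eqref{t3.9:e3} visibly fails. The only other point to keep in mind is the degenerate case $|V(G)| = 1$, where \ref{c2.13:s1} is vacuously true while $G = G'$ fails; this case is ruled out by reading the corollary under the standing assumption $|V(G)| \geqslant 2$, which is in any event built into the condition $G = G'$ on the side \ref{c2.13:s2}.
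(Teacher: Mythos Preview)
Your proof is correct and follows exactly the route the paper intends: the paper gives no explicit proof of Corollary~\ref{c2.13}, merely stating that it (together with Corollary~\ref{c3.10}) follows from Theorem~\ref{t3.9}, and your argument supplies precisely that derivation. Your identification of the degenerate case \(|V(G)|=1\) is apt and is in fact the content of the paper's Remark~\ref{r3.12}; note also that in the direction \(\ref{c2.13:s2}\Rightarrow\ref{c2.13:s1}\) the hypothesis \(G'=G\) is not even needed once a partition \((A,B)\) is given, since connectedness alone makes \eqref{t3.9:e3} automatic.
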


\begin{remark}\label{r3.12}
For connected graphs \(G\) the equality \(G' = G\) holds if and only if \(|V(G)| \geqslant 2\).
\end{remark}

\begin{corollary}\label{c3.7}
Let \(G = G(A, B)\) be a bipartite graph with parts \(A\) and \(B\). Then the following statements are equivalent:
\begin{enumerate}
\item\label{c3.7:s1} \(G\) is a path-complete path-bipartite graph of \(A\) and \(B\).
\item\label{c3.7:s2} \(G\) is a complete bipartite graph.
\end{enumerate}
\end{corollary}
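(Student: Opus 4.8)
The plan is to use the hypothesis that $G = G(A,B)$ is bipartite with \emph{fixed} parts $A$ and $B$: by Definition~\ref{d2.3} this means no edge of $G$ has both ends in $A$ or both ends in $B$, so the induced subgraphs $G[A]$ and $G[B]$ are empty. The crucial preliminary step, valid whenever $G$ is a path-bipartite graph of $A$ and $B$, is to identify $\mathcal{B}_{path}(G)$ explicitly, namely
\[
\mathcal{B}_{path}(G) = \{(a,b) \in A \times B \colon \{a,b\} \in E(G)\}.
\]
To see this, let $P \subseteq G$ be a be-path of $A$ and $B$ joining $a$ and $b$. Every edge of $P$ is an edge of $G$, hence joins a vertex of $A$ to a vertex of $B$; but Definition~\ref{d3.6} says $P$ has \emph{exactly one} such edge, so $P$ has exactly one edge and therefore $P = (a, b)$ with $\{a, b\} \in E(G)$. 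Conversely, for any $\{a,b\} \in E(G)$ with $a \in A$ and $b \in B$, the single-edge path $(a,b)$ is a be-path of $A$ and $B$ joining $a$ and $b$, so $(a,b) \in \mathcal{B}_{path}(G)$. (If one prefers to avoid this direct argument, the same conclusion follows from Theorem~\ref{t3.4}: since $G[A]$ and $G[B]$ are empty, the sets $A_1$, $B_1$ there are the singletons $\{a_1\}$, $\{b_1\}$, so $C_1 = \{a_1, b_1\}$ and $G[C_1]$ is connected iff $\{a_1,b_1\} \in E(G)$.)

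For \(\ref{c3.7:s1} \Rightarrow \ref{c3.7:s2}\): assuming \ref{c3.7:s1}, the graph $G$ is path-bipartite of $A$ and $B$ and path-complete, so $\mathcal{B}_{path}(G) = A \times B$; combined with the identification above this gives $\{a,b\} \in E(G)$ for all $a \in A$ and $b \in B$. Since $G$ is bipartite with parts $A$ and $B$, these exhaust the possible edges, so $G$ is a complete bipartite graph, which is \ref{c3.7:s2}.

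For \(\ref{c3.7:s2} \Rightarrow \ref{c3.7:s1}\): assume $G$ is complete bipartite with parts $A$ and $B$. Then $V(G) = A \cup B$ with $A$, $B$ disjoint and nonempty, no vertex is isolated, so $G' = G$, and $G$ is connected (distinct vertices are joined by a path of length at most $2$). By Corollary~\ref{c2.13}, $G$ is a path-bipartite graph of $A$ and $B$, so $\mathcal{B}_{path}(G)$ is defined and, by the identification above, equals $\{(a,b) \colon \{a,b\} \in E(G)\} = A \times B$; hence $G$ is path-complete, giving \ref{c3.7:s1}. (Alternatively, one may finish with Theorem~\ref{t3.6}: since $G[A]$ and $G[B]$ are empty, $\mathbf{A}$ and $\mathbf{B}$ consist of the singleton subsets of $A$ and $B$, and $\mathbf{G}(\mathbf{A}, \mathbf{B})$ is complete bipartite because $G$ is, so $G$ is path-complete.)

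I do not expect a genuine obstacle here: the argument is essentially bookkeeping around Definition~\ref{d3.6}. The one point that needs a little care is ordering the steps correctly in the converse — one must first invoke Corollary~\ref{c2.13} to know that $G$ is path-bipartite of $A$ and $B$ before speaking of $\mathcal{B}_{path}(G)$ and path-completeness — and, in the forward direction, noting that bipartiteness with the \emph{given} parts is what collapses every be-path to a single edge.
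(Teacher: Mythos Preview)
Your proof is correct. The primary route you take is a bit more elementary than the paper's: you argue directly that in a bipartite graph with parts $A$ and $B$ every be-path of $A$ and $B$ collapses to a single edge, which immediately identifies $\mathcal{B}_{path}(G)$ with the edge set and makes both implications transparent. The paper instead routes both directions through Theorem~\ref{t3.6}, using that the connected components of $G[A]$ and $G[B]$ are singletons so that $\mathbf{G}(\mathbf{A},\mathbf{B})$ is essentially $G$ itself; your parenthetical alternatives are exactly this argument. For \ref{c3.7:s2}$\Rightarrow$\ref{c3.7:s1} the paper cites Corollary~\ref{c3.10} to obtain path-bipartiteness, whereas you use Corollary~\ref{c2.13} (connectedness plus $G'=G$); your choice is arguably cleaner here since it yields path-bipartiteness for the \emph{given} parts $A$, $B$ directly. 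Either way, the substance is the same.
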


\begin{proof}
\(\ref{c3.7:s1} \Rightarrow \ref{c3.7:s2}\). Let \ref{c3.7:s1} hold. To prove the validity of \ref{c3.7:s2} it suffices to note that every connected component of induced graphs \(G[A]\) and \(G[B]\) is a graph with one vertex and to use Theorem~\ref{t3.6}.

\(\ref{c3.7:s2} \Rightarrow \ref{c3.7:s1}\). Let \ref{c3.7:s2} hold. Then the equality \(G' = G\) is valid. Consequently, \(G\) is path-bipartite by Corollary~\ref{c3.10}. Now \ref{c3.7:s1} follows from Theorem~\ref{t3.6}.
\end{proof}

\begin{example}\label{ex3.1}
Write
\begin{align*}
x_{1} & =(1,0,0,0), & x_{2} & =(0,1,0,0), & x_{3} & = (0,0,1,0), & x_{4} &=(0,0,0,1),\\
x_{5} & =(1,0,1,0), & x_{6} & =(1,1,0,0), & x_{7} & = (1,0,0,1), & x_{8} & =(0,0,0,0),\\
x_{9} & =(0,1,1,0), & x_{10} & =(0,1,0,1),& x_{11} & =(0,0,1,1), & x_{12} & =(0,1,1,1),\\
x_{13}& =(1,1,0,1), & x_{14} & =(1,1,1,1),& x_{15} & =(1,0,1,1), & x_{16} & =(1,1,1,1).
\end{align*}
Let us define a graph $G=(V,E)$ as
\begin{align*}
V(G) & := \{x_{1},x_{2},x_{3},x_{4},x_{5},x_{6},x_{7},x_{8}, x_{9},x_{10},x_{11},x_{12}, x_{13},x_{14},x_{15},x_{16}\}  \text{ and}\\
E(G) &  := \{ \{x_{1},x_{5}\},
\; \{x_{1},x_{6}\}, \;\{x_{1},x_{7}\}, \; \{x_{1},x_{8}\},\; \{x_{2},x_{6}\},\; \{x_{2},x_{9}\}, \;\{x_{2},x_{10}\},\\
& \hspace*{6mm} \{x_{3},x_{5}\}, \,\{x_{3},x_{8}\}, \;\{x_{3},x_{9}\}, \;\{x_{3},x_{11}\}, \; \{x_{4},x_{7}\}, \;\{x_{4},x_{8}\}, \;\{x_{4},x_{10}\},\\
& \hspace*{6mm}  \{x_{5},x_{14}\}, \;\{x_{5},x_{15}\}, \;\{x_{6},x_{13}\}, \;\{x_{6},x_{14}\}, \;\{x_{7},x_{15}\}, \; \{x_{9},x_{12}\},\\
& \hspace*{6mm}  \{x_{10},x_{12}\}, \;\{x_{11},x_{12}\}, \;\{x_{13},x_{16}\}, \;\{x_{14},x_{16}\}, \;\{x_{15},x_{16}\} \}.
\end{align*}
Then, by Corollary~\ref{c2.13}, $G$ is a path-bipartite graph of the sets
\begin{equation}\label{e3.1:e1}
A = \{ x_{1},x_{2},x_{3},x_{4}, x_{9},x_{10},x_{11},x_{12}\} \text{ and }
B = \{x_{5},x_{6},x_{7},x_{8}, x_{13},x_{14},x_{15},x_{16}\}
\end{equation}  	
because \(G\) is connected and \(A \cup B = V(G)\).
\end{example}

\section{Path-proximinal graphs}
\label{sec3}

The first theorem of the section describes the geometry of proximinal pairs \((A, B)\) in semimetric spaces \((X, d)\) for which the graphs \(G\), \(V(G) = X\) and \(E(G)\) defined by \eqref{d3.10:e1}, are path-bipartite. In what follows \(A_0\) and \(B_0\) are subsets of \(A\) and \(B\) defined by \eqref{d1.2:e1} and, respectively, by \eqref{d1.2:e2}.

\begin{theorem}\label{t3.5}
Let \((X, d)\) be a semimetric space and let \(A\), \(B\) be disjoint proximinal subsets of \((X, d)\) such that
\begin{equation}\label{t3.5:e1}
X = A \cup B.
\end{equation}
Let us consider a graph \(G\) such that \(V(G) = X\) and the equivalence
\[
(\{x,y\} \in E(G)) \Leftrightarrow (d(x,y) \leqslant \dist(A,B))
\]
is valid for all distinct \(x\), \(y \in X\). Then \(G\) is path-bipartite for \(A\) and \(B\) if and only if the following conditions are fulfilled:
\begin{enumerate}
\item\label{t3.5:c1} For every \(a \in A \setminus A_0\) there are a best proximity pair \((a^*, b^*) \in A \times B\) and a finite path \((a_1, \ldots, a_n) \subseteq G[A]\) such that \(a_1 = a\), \(a_n = a^*\).
\item\label{t3.5:c2} For every \(b \in B \setminus B_0\) there are a best proximity pair \((a^*, b^*) \in A \times B\) and a finite path \((b_1, \ldots, b_n) \subseteq G[B]\) such that \(b_1 = b\), \(b_n = b^*\).
\end{enumerate}
\end{theorem}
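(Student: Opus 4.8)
The plan is to reduce the assertion to a purely graph‑theoretic condition via Theorem~\ref{t3.9}, and then to translate that condition, through a description of the edges of $G$, into conditions \ref{t3.5:c1} and \ref{t3.5:c2}.

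The starting observation is that for $a \in A$ and $b \in B$ one always has $d(a,b) \geqslant \dist(A,B)$, so $\{a,b\} \in E(G)$ holds if and only if $d(a,b) = \dist(A,B)$, i.e. if and only if $(a,b)$ is a best proximity pair for $A$ and $B$. Consequently a vertex $v \in A$ is adjacent in $G$ to some vertex of $B$ precisely when $v \in A_0$, and symmetrically a vertex $v \in B$ is adjacent to some vertex of $A$ precisely when $v \in B_0$. Since $A$ and $B$ are disjoint and nonempty (being proximinal) with $A\cup B = V(G)$ by \eqref{t3.5:e1}, Theorem~\ref{t3.9} says that $G$ is path‑bipartite for $A$ and $B$ if and only if every connected component $G_i$ of $G$ meets both $A$ and $B$; as $V(G_i)$ is a nonempty subset of $A \cup B$, this is the same as requiring that no connected component of $G$ is contained in $A$ and none is contained in $B$.

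The core step is the identification: the connected components of $G$ contained in $A$ are exactly the connected components $C$ of the induced graph $G[A]$ with $V(C)\cap A_0 = \varnothing$ (and dually for $B$). Indeed, if $C$ is such a component of $G[A]$, then no vertex of $C$ has a neighbour in $B$ (else that vertex would lie in $A_0$), and no vertex of $C$ has a neighbour in $A\setminus V(C)$ (every edge of $G$ with both ends in $A$ is an edge of $G[A]$, hence cannot leave the $G[A]$‑component $C$); thus $C$ is a maximal connected subgraph of $G$, i.e. a connected component of $G$. Conversely, if $G_i$ is a connected component of $G$ with $V(G_i)\subseteq A$, then $G_i$ is a connected subgraph of $G[A]$, hence is contained in the $G[A]$‑component $C$ through any of its vertices, while $C$ is a connected subgraph of $G$ meeting $V(G_i)$, so $C \subseteq G_i$ by Lemma~\ref{l2.11}; therefore $G_i = C$, and $V(G_i)\cap A_0 = \varnothing$ because a vertex of $A_0$ would have a neighbour in $B$, impossible inside the component $G_i \subseteq A$.

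It remains to match this with \ref{t3.5:c1} and \ref{t3.5:c2}. A best proximity pair $(a^*,b^*)$ has $a^* \in A_0$, and a finite path in $G[A]$ from $a$ to $a^*$ exists exactly when $a$ and $a^*$ lie in the same connected component of $G[A]$; so \ref{t3.5:c1} says precisely that the $G[A]$‑component of every $a \in A\setminus A_0$ contains a point of $A_0$. Since a component already containing a point of $A_0$ trivially meets $A_0$, \ref{t3.5:c1} is equivalent to: every connected component of $G[A]$ meets $A_0$, which by the identification above is equivalent to: $G$ has no connected component contained in $A$. Likewise \ref{t3.5:c2} is equivalent to: $G$ has no connected component contained in $B$. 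Combined with the reduction of the second paragraph, \ref{t3.5:c1} and \ref{t3.5:c2} hold together if and only if $G$ is path‑bipartite for $A$ and $B$, as required. The step that needs the most care is the component identification and its correct use of Lemma~\ref{l2.11}; one should also check that it behaves correctly on degenerate inputs (an isolated vertex of $G[A]$ outside $A_0$, or the case $A_0 = \varnothing$), all of which are consistent with the stated equivalence.
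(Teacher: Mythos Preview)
Your argument is correct. The reduction to Theorem~\ref{t3.9} in both directions, together with your component identification (the $G$-components contained in $A$ are exactly the $G[A]$-components missing $A_0$), gives a clean proof; Lemma~\ref{l2.11} is invoked correctly in both inclusions, and the degenerate cases (a single isolated vertex of $G[A]$ outside $A_0$, or $A_0=\varnothing$) fall out automatically since one-vertex subgraphs are connected in the paper's sense and components are induced.

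The paper takes a somewhat different route. For the converse implication (conditions $\Rightarrow$ path-bipartite) it argues essentially as you do, verifying via Theorem~\ref{t3.9} that each component of $G$ meets both $A$ and $B$. For the direct implication, however, it does \emph{not} go through Theorem~\ref{t3.9} again: given $a\in A\setminus A_0$ it picks a be-path $P_{A,B}\subseteq G$ through $a$, locates the unique crossing edge $\{a^*,b^*\}$ of $P_{A,B}$, and then shows that the subpath of $P_{A,B}$ from $a$ to $a^*$ stays inside $A$ (otherwise one could exhibit two distinct $A$--$B$ edges on $P_{A,B}$, contradicting the be-path definition). Your approach is more symmetric and purely graph-theoretic, treating both implications uniformly via the component criterion; the paper's forward argument is more hands-on, makes the be-path structure explicit, and does not depend on the ``$\Leftarrow$'' half of Theorem~\ref{t3.9}.
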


\begin{proof}
Let \(G\) be a path-bipartite graph. We must show that conditions \ref{t3.5:c1}--\ref{t3.5:c2} are valid.

\ref{t3.5:c1}. Let \(a\) be a point of \(A \setminus A_0\). By Definition~\ref{d3.6}, there is a be-path \(P_{A, B} \subseteq G\) such that \(a \in V(P_{A, B})\). Using this definition again we can find \(a^* \in A\) such that \(a^* \in V(P_{A, B})\). The be-path \(P_{A, B}\) is a connected graph, consequently, there is a path \(P \subseteq P_{A, B}\) joining \(a\) and \(a^*\) in \(P_{A, B}\). To complete the proof of \ref{t3.5:c1} it suffices to show that \(V(P) \subseteq A\). Suppose contrary that there is a point \(b^0 \in B\) such that \(b^0 \in V(P)\). Since \(P\) is a path joining \(a\) and \(a^*\), there are paths \(P_1 \subseteq P\) and \(P_2 \subseteq P\) such that
\begin{equation}\label{t3.5:e2}
a, b^0 \in V(P_1), \quad b^0, a^* \in V(P_2), \quad P = P_1 \cup P_2 \quad\text{and}\quad V(P_1) \cap V(P_2) = \{b^0\}.
\end{equation}
Using \eqref{t3.5:e2}, \(V(P) \subseteq A \cup B\) and \(a_1\), \(a^* \in A\) and \(b^0 \in B\), we can find \(\{a^1, b^1\} \in E(P_1)\) and \(\{a^2, b^2\} \in E(P_2)\) such that
\begin{equation}\label{t3.5:e3}
a^1, a^2 \in A \quad \text{and}\quad b_1, b_2 \in B.
\end{equation}
The last equality in \eqref{t3.5:e2} implies that \(\{a^1, b^1\}\) and \(\{a^2, b^2\}\) are different edges of \(P_{A, B}\) that together with \eqref{t3.5:e3} contradicts the definition of be-paths.

\ref{t3.5:c2}. The validity of \ref{t3.5:c2} can be proved similarly.

Suppose now that \ref{t3.5:c1} and \ref{t3.5:c2} hold. To prove that \(G\) is a path-bipartite graph of sets \(A\) and \(B\) we consider an arbitrary connected component \(G_j\) of \(G\) and an arbitrary point \(p \in V(G_j)\). Equality~\eqref{t3.5:e1} implies that \(p \in A\) or \(p \in B\). Without loss of generality we assume that \(p \in A\). If \(p \in A_0\) holds, then, by \eqref{d1.2:e1}, there is \(q \in B\) such that \(d(p, q) = \dist(A, B)\). Hence, \(\{p, q\} \in E(G)\) by definition of \(G\) and, consequently, \(q \in V(G_j)\) by Lemma~\ref{l2.11}. If \(p \in A \setminus A_0\), then, by condition~\ref{t3.5:c1}, there is a path
\begin{equation}\label{t3.5:e4}
P \subseteq G
\end{equation}
joining \(p\) with a point \(p_0 \in A_0\). Since \(G_j\) is a connected component of \(G\), inclusion~\eqref{t3.5:e4} implies \(P \subseteq G_j\) by Lemma~\ref{l2.9}. Thus, the point \(p_0 \in A_0\) also belongs to \(V(G_j)\). Now, arguing as above, we can find \(q \in B \cap V(G_j)\). Hence, \(G\) is path-bipartite by Theorem~\ref{t3.9}.
\end{proof}

\begin{corollary}\label{c3.2}
Let \(G\) be a path-proximinal graph for sets \(A\) and \(B\) with respect to a semimetric \(d\) on the set \(X = A \cup B\). Then the inequality
\begin{equation}\label{c3.2:e1}
\dist(A, B) > 0
\end{equation}
holds and there are points \(a_0 \in A\) and \(b_0 \in B\) such that
\begin{equation}\label{c3.2:e2}
d(a_0, b_0) = \dist(A, B).
\end{equation}
\end{corollary}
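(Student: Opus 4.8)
The plan is to produce the pair $(a_0, b_0)$ directly from a single be-path contained in $G$ and to read off both conclusions from its unique crossing edge. By Definition~\ref{d3.7}, the hypothesis that $G$ is path-proximinal for $A$ and $B$ w.r.t.\ $d$ says in particular that $G$ is a path-bipartite graph of $A$ and $B$, that $X = A \cup B$, and that the equivalence \eqref{d3.10:e1} holds for all distinct $x$, $y \in X$. By Remark~\ref{r3.4} the set $\mathcal{B}_{path}(G)$ is nonempty, so I fix a be-path $P_{A, B} \subseteq G$ joining some $a \in A$ with some $b \in B$.

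Next I invoke Definition~\ref{d3.6}: the be-path $P_{A, B}$ has an edge $\{a_0, b_0\} \in E(P_{A, B})$ meeting both $A$ and $B$, and after relabelling I may assume $a_0 \in A$ and $b_0 \in B$. Since $P_{A, B} \subseteq G$, this is an edge of $G$, and $a_0 \neq b_0$ because $A \cap B = \varnothing$; hence the forward implication in \eqref{d3.10:e1} gives $d(a_0, b_0) \leqslant \dist(A, B)$. On the other hand, $a_0 \in A$ and $b_0 \in B$ give $\dist(A, B) \leqslant d(a_0, b_0)$ straight from the definition \eqref{e1.2} of $\dist$. Combining the two inequalities yields $d(a_0, b_0) = \dist(A, B)$, which is \eqref{c3.2:e2}. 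Finally, since $a_0 \neq b_0$, the semimetric axiom forces $d(a_0, b_0) > 0$, and therefore $\dist(A, B) = d(a_0, b_0) > 0$, that is, \eqref{c3.2:e1}.

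There is essentially no obstacle here: the only point requiring care is to invoke the right consequence of ``path-proximinal'', namely that $G$ really is path-bipartite and hence (by Remark~\ref{r3.4}, or directly from Definition~\ref{d3.6}, since a path-bipartite graph is the union of a nonempty family of be-paths) contains at least one be-path, rather than trying to extract $(a_0, b_0)$ from the proximinality of $A$ and $B$. An alternative route is to note via Theorem~\ref{t3.9} that $G$ is the union of the be-paths $P \subseteq G$, so that $E(G) \neq \varnothing$ and $G$ possesses a crossing edge; but quoting $\mathcal{B}_{path}(G) \neq \varnothing$ is the shortest way to the conclusion.
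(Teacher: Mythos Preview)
Your proof is correct. The paper's own argument takes a slightly less direct path: it observes that, by Definition~\ref{d3.7}, a path-proximinal graph is path-bipartite, then invokes Theorem~\ref{t3.5} (the necessity of conditions \ref{t3.5:c1}--\ref{t3.5:c2}) to produce a best proximity pair $(a_0, b_0)$ satisfying \eqref{c3.2:e2}, and finishes with the semimetric axiom to get \eqref{c3.2:e1}. Your approach bypasses Theorem~\ref{t3.5} entirely: you read off the crossing edge $\{a_0, b_0\}$ of a single be-path and combine the defining equivalence \eqref{d3.10:e1} with the trivial bound $\dist(A,B) \leqslant d(a_0, b_0)$ from \eqref{e1.2} to force equality. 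This is more elementary and self-contained, needing only Definitions~\ref{d3.6} and~\ref{d3.7} (plus the observation, immediate from Definition~\ref{d3.6} or Remark~\ref{r3.4}, that a path-bipartite graph contains at least one be-path), whereas the paper's route leans on the structural theorem just proved. Both arguments are short; yours has the advantage of not depending on Theorem~\ref{t3.5}.
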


\begin{proof}
By Definition~\ref{d3.7}, every path-proximinal graph for sets \(A\) and \(B\) are path-bipartite graph of these sets. Hence, by Theorem~\ref{t3.5}, there is a best proximity pair \((a_0, b_0) \in A \times B\) which satisfies~\eqref{c3.2:e2}. Now \eqref{c3.2:e1} follows from \eqref{c3.2:e2} and the definition of semimetrics.
\end{proof}

\begin{theorem}\label{t3.7}
For every path-bipartite graph \(G\) of sets \(A\) and \(B\), there is a metric \(d\) on \(X:= V(G)\) such that \(G\) is path-proximinal for \(A\) and \(B\) with respect to the metric \(d\).
\end{theorem}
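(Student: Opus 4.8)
The plan is to exhibit an explicit metric on $X := V(G)$ taking only the three values $0$, $1$, $2$. First I would invoke Proposition~\ref{p3.2} to get $V(G) = A \cup B$, so that $X = A \cup B$ as demanded in Definition~\ref{d3.7}; recall also that $A$ and $B$ are nonempty and disjoint. Since $G$ is a path-bipartite graph of $A$ and $B$, Remark~\ref{r3.4} (or Definition~\ref{d3.6} directly) supplies a be-path $P \subseteq G$, and its distinguished edge, having one end in $A$ and one in $B$, yields a pair $a_0 \in A$, $b_0 \in B$ with $\{a_0, b_0\} \in E(G)$.

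Next I would define $d \colon X \times X \to [0,+\infty[$ by setting $d(x, x) = 0$ for every $x \in X$, $d(x, y) = 1$ whenever $\{x, y\} \in E(G)$, and $d(x, y) = 2$ for all remaining pairs of distinct $x$, $y \in X$. The verification that $d$ is a metric is routine: symmetry and the separation property are immediate from the definition, while the triangle inequality $d(x, y) \leqslant d(x, z) + d(z, y)$ is trivial when two of $x$, $y$, $z$ coincide and, when they are pairwise distinct, follows because each summand on the right is at least $1$, so their sum is at least $2 \geqslant d(x, y)$.

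Then I would compute $\dist(A, B)$. For $a \in A$ and $b \in B$ the vertices $a$, $b$ are distinct (as $A \cap B = \varnothing$), so $d(a, b) \in \{1, 2\}$; together with $d(a_0, b_0) = 1$ this gives $\dist(A, B) = 1$. With this value in hand, the equivalence \eqref{d3.10:e1} is immediate: $\{x, y\} \in E(G)$ gives $d(x, y) = 1 \leqslant \dist(A, B)$, and conversely $d(x, y) \leqslant 1$ with $x \neq y$ forces $d(x, y) = 1$, since $d$ is $\{1, 2\}$-valued on distinct pairs, hence $\{x, y\} \in E(G)$.

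Finally I would check that $A$ and $B$ are proximinal subsets of $(X, d)$. Fix $x \in X$ and consider, say, the set $A$: if $x \in A$ then $d(x, x) = 0 = \dist(x, A)$, so $x$ is its own best approximation; if $x \in B$ then $d(x, a) \in \{1, 2\}$ for every $a \in A$, and since $A \neq \varnothing$ the infimum $\inf\{d(x, a) \colon a \in A\}$ is attained by some $a \in A$ (it equals $1$ if $x$ has a neighbour in $A$, and $2$ otherwise). The argument for $B$ is symmetric, so $G$ is path-proximinal for $A$ and $B$ with respect to $d$. I do not expect any genuine obstacle in this argument; the only point needing a moment's attention is the proximinality check, which reduces to the observation that a metric attaining only finitely many values has all of its point-to-set distances attained.
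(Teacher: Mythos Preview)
Your argument is correct and follows essentially the same route as the paper: both define the $\{0,1,2\}$-valued metric on $X$ from the edge set of $G$, use a be-path's distinguished edge to pin down $\dist(A,B)=1$, and observe that finiteness of the distance set yields proximinality of $A$ and $B$. The only differences are cosmetic---you verify the triangle inequality and proximinality more explicitly, and you locate the edge $\{a_0,b_0\}$ before defining $d$ rather than after---but the content is the same.
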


\begin{proof}
Let \(G\) be  path-bipartite graph of sets \(A\) and \(B\). We must find a metric \(d \colon X \times X \to [0, \infty[\), \(X = A \cup B\), such that \(A\) and \(B\) are proximinal subsets of \((X, d)\) and \eqref{d3.10:e1} holds for all distinct \(x\), \(y \in X\). Let us define a function \(d \colon X \times X \to [0, \infty[\) as
\begin{equation}\label{t3.7:e1}
d(x, y) = \begin{cases}
0 & \text{if } x = y\\
1 & \text{if } \{x, y\} \in E(G)\\
2 & \text{if } x \neq y \text{ and } \{x, y\} \notin E(G).
\end{cases}
\end{equation}
Then \(d\) evidently is a metric on \(X\).

The sets \(A\) and \(B\) are nonvoid by Definition~\ref{d3.6}. Since the set \(\{d(x, y) \colon x, y \in X\}\) is finite, every nonempty subset of \(X\) is proximinal. Hence, \(A\) and \(B\) are proximinal in \((X, d)\).

We claim that
\begin{equation}\label{t3.7:e2}
\dist(A, B) = 1.
\end{equation}
Indeed, since \(A\) and \(B\) disjoint and nonvoid, \eqref{t3.7:e1} and \eqref{e1.2} give us the inequality
\begin{equation}\label{t3.7:e3}
\dist(A, B) \geqslant 1.
\end{equation}
Since \(G\) is path-bipartite of \(A\) and \(B\), there is a be-path \(P_{A, B} \subseteq G\). Consequently, by Definition~\ref{d3.6}, there is \(\{a_0, b_0\} \in E(P_{A, B})\) such that
\begin{equation}\label{t3.7:e4}
A \cap \{a_0, b_0\} \neq \varnothing \neq B \cap \{a_0, b_0\}.
\end{equation}
Using the equality \(A \cap B = \varnothing\) and \eqref{t3.7:e4} we may suppose, without loss of generality, that
\begin{equation}\label{t3.7:e5}
a_0 \in A \text{ and } b_0 \in B.
\end{equation}
From \(\{a_0, b_0\} \in E(P_{A, B})\) and \(P_{A, B} \subseteq G\) it follows that
\begin{equation}\label{t3.7:e6}
\{a_0, b_0\} \in E(G).
\end{equation}
Now \eqref{t3.7:e1} and \eqref{t3.7:e6} imply \(d(a_0, b_0) = 1\) and, consequently,
\begin{equation}\label{t3.7:e7}
\dist(A, B) \leqslant 1.
\end{equation}
holds by \eqref{t3.7:e5}. Now \eqref{t3.7:e2} follows from \eqref{t3.7:e3} and \eqref{t3.7:e7}. To complete the proof it suffices to show that \eqref{d3.10:e1} holds for all distinct \(x\), \(y \in X\). Indeed, using \eqref{t3.7:e3}, we obtain the equivalence
\[
(\{x, y\} \in E(G)) \Leftrightarrow (d(x, y) = \dist(A, B))
\]
for all \(x\), \(y \in X\). Now the equality \(\dist (A, B) = 1\) and \eqref{t3.7:e1} imply the equivalence
\[
(d(x, y) \leqslant \dist(A, B)) \Leftrightarrow (d(x, y) = \dist(A, B)),
\]
for any pair of distinct \(x\), \(y \in X\), .
\end{proof}

\begin{example}\label{ex3.2}
Let \(X\) be the set of all sequences \(\widetilde{q} = (\eta_1, \eta_2, \eta_3, \eta_4)\), where each \(\eta_i \in \{0, 1\}\) and let \(A\), \(B\subseteq X\) and the graph \(G\) be defined as in Example~\ref{ex3.1}. Let us denote by \(d(\widetilde{p}, \widetilde{q})\) the Hamming distance between \(\widetilde{p}\), \(\widetilde{q} \in X\),
\begin{equation*}
d(\widetilde{p}, \widetilde{q}) = \sum_{i=1}^4 |\mu_i - \eta_i|.
\end{equation*}
Then \((X, d)\) is a metric space, $A$ and $B$	are disjoint proximinal subsets of \((X, d)\), the equality \(\dist(A, B) = 1\) holds, vertices \(\widetilde{p}\) and \(\widetilde{q}\) of \(G\) are adjacent iff \(d(\widetilde{p}, \widetilde{q}) = 1\), and
\begin{align*}
\mathcal{B}_{path}(G) & = \{
(x_{1},x_{5}), (x_{1},x_{6}), (x_{1},x_{7}), (x_{1},x_{8}), (x_{1},x_{13}), (x_{1},x_{14}), (x_{1},x_{15}), \\
& \hspace*{6mm} (x_{1},x_{16}), (x_{2},x_{6}), (x_{2},x_{13}), (x_{2},x_{16}), (x_{3},x_{5}), (x_{3},x_{8}), (x_{3},x_{14}), \\
& \hspace*{6mm} (x_{3},x_{15}), (x_{3},x_{16}),(x_{4},x_{7}), (x_{4},x_{8}), (x_{4},x_{13}), (x_{4},x_{15}), (x_{4},x_{16}), \\
& \hspace*{6mm} (x_{9},x_{5}), (x_{9},x_{6}), (x_{9},x_{8}),(x_{9},x_{13}), (x_{9},x_{14}), (x_{9},x_{15}), (x_{9},x_{16}), \\
& \hspace*{6mm} (x_{10},x_{6}), (x_{10},x_{8}), (x_{10},x_{13}), (x_{10},x_{14}),(x_{10},x_{16}), (x_{11},x_{5}), (x_{11},x_{8}),\\
& \hspace*{6mm}  (x_{11},x_{14}),(x_{11},x_{15}), (x_{11},x_{16}), (x_{12},x_{5}), (x_{12},x_{6}), (x_{12},x_{7}), (x_{12},x_{8}), \\
& \hspace*{6mm} (x_{12},x_{13}), (x_{12},x_{14}), (x_{12},x_{15}), (x_{12},x_{16}) \}.
\end{align*}
The element $(x_{4},x_{15})$ of $\mathcal{B}_{path}(G)$ admits the following be-paths
\begin{align*}
&(x_{4},x_{7},x_{15});\ (x_{4},x_{7},x_{13},x_{6},x_{14},x_{16},x_{15});\  (x_{4},x_{7},x_{13},x_{16},x_{15}) \text{ and }\\
&(x_{4},x_{7},x_{13},x_{16},x_{14},x_{6},x_{13},x_{16},x_{15}).
\end{align*}
The graph \(G\) is path-proximinal for \(A\) and \(B\) w.r.t. the Hamming distance \(d\).
\end{example}

\begin{example}\label{ex3.16}
Let \(G\) be a graph with \(V(G) = A\) such that \(\{\tilde{x}, \tilde{y}\} \in E(G)\) iff \(d(\tilde{x}, \tilde{y}) = 1\), where \(A\) and \(d\) are defined as in Example~\ref{ex3.2}. The point \(x_1\) is an isolated vertex of \(G\) (see Figure~\ref{fig2}). Hence, the bipartite graph \(G\) is not path-proximinal by Corollary~\ref{c2.13} and Theorem~\ref{t3.7}.
\end{example}

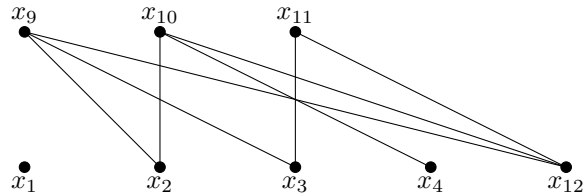
\begin{figure}[ht]
\begin{tikzpicture}[scale=1,
arrow/.style = {-{Stealth[length=5pt]}, shorten >=2pt}]

\def\xx{1.8cm}
\def\yy{0.9cm}

\coordinate [label=below:{$x_{1}$}] (A1) at (0*\xx, -\yy);
\coordinate [label=below:{$x_{2}$}] (A2) at (1*\xx, -\yy);
\coordinate [label=below:{$x_{3}$}] (A3) at (2*\xx, -\yy);
\coordinate [label=below:{$x_{4}$}] (A4) at (3*\xx, -\yy);
\coordinate [label=below:{$x_{12}$}] (A8) at (4*\xx, -\yy);
\coordinate [label=above:{$x_{9}$}] (A5) at (0*\xx, \yy);
\coordinate [label=above:{$x_{10}$}] (A6) at (1*\xx, \yy);
\coordinate [label=above:{$x_{11}$}] (A7) at (2*\xx, \yy);

\draw [fill, black] (A1) circle (2pt);
\draw [fill, black] (A2) circle (2pt);
\draw [fill, black] (A3) circle (2pt);
\draw [fill, black] (A4) circle (2pt);
\draw [fill, black] (A5) circle (2pt);
\draw [fill, black] (A6) circle (2pt);
\draw [fill, black] (A7) circle (2pt);
\draw [fill, black] (A8) circle (2pt);
\draw (A2) -- (A5) -- (A3) -- (A7);
\draw (A4) -- (A6) -- (A2);
\draw (A5) -- (A8) -- (A7);
\draw (A8) -- (A6);
\end{tikzpicture}
\caption{The point \(x_1\) is an isolated vertex of \(G\).}
\label{fig2}
\end{figure}

\begin{example}\label{ex3.12}
Let us consider a metric space $(\CC, d)$, where \(\CC\) is the set of all complex numbers $z=x+iy$. Suppose that for arbitrary $z_{1} = x_{1}+iy_{1}$, $z_{2} = x_{2}+iy_{2}$ we have
$$
d(z_{1},z_{2})=
\begin{cases}
\frac{1}{2}|[x_{1}]-[x_{2}]| + | [y_{1}]-[y_{2}]| +1 & \text{ if } z_{1}\neq z_{2}\\
0 & \text{ if } z_{1}= z_{2},
\end{cases}
$$
where $[x_i]$ \(([y_i])\) is the integer part of $x_i$ \((y_i)\).

Write
\[
A := \{n+ih\colon n\in \nit^{*} \text{ and } h=0\} \text{ and }
B := \{m+ik\colon m\in \nit \text{ and } k\in \nit^{*}\},
\]
where \(\nit^*\) is the set of all naturals without zero. Then the equalities
\begin{align*}
\dist(A,B) & = \inf\{ d(a,b)\colon (a,b)\in A\times B\} \\
& = \inf\left\{\frac{1}{2}|n-m|+ |k|+1\colon (n,m,k)\in \nit^{*}\times \nit \times \nit^{*}\right\} = 2
\end{align*}
hold.

Since every nonempty subset of the set
\[
\{d(z_1, z_2) \colon z_1, z_2 \in \CC\}
\]
has the smallest element, each nonempty \(S \subseteq \CC\) is proximinal subset of the metric space \((\CC, d)\). Hence, \((A, B)\) is a proximinal pair for \((\CC, d)\).

Let us define a graph $G$ such that $V(G)= A \cup B$ and, for \(z_1\), \(z_2 \in A \cup B\), \(\{z_1, z_2\} \in E(G)\) iff \(0 < d(z_1, z_2) \leqslant 2\). It is easy to show that:
\begin{itemize}
\item For all $n\in \nit^*$, $d(n,n+1)= \frac{3}{2} < \dist(A,B)$;
\item For $m\in \nit^*$, $d(m,m+i)= 2 = \dist(A,B)$;
\item For all $(m,k)\in \nit \times \nit^*$, $d(m+ik,m+i(k+1))=  2 = \dist(A,B)$.
\end{itemize}	
Let us consider $n\in \nit^*$ and $(m,k)\in \nit \times \nit^*$. Suppose $n\leqslant m$. Then we get the be-path \(P\)
\begin{align*}
V(P) & = \{n,n+1,\cdots,m-1,m,m+i,\cdots,m+i(k-1),m+ik\}, \\
E(P) & = \{\{n,n+1\}, \cdots,\{m-1,m\},\{m,m+i\}, \cdots, \{m+i(k-1),m+ik\}\}.
\end{align*}
If $m< n$, then we get the following be-path \(P_1\):
\begin{align*}
V(P_1) & = \{n,n+1,\cdots,m+1,m,m+i,\cdots,m+i(k-1),m+ik\}, \\
E(P_1) & = \{\{n,n+1\}, \cdots,\{m+1,m\},\{m,m+i\},\cdots, \{m+i(k-1),m+ik\}\}.
\end{align*}
Thus, $\mathcal{B}_{path}(G)= \{(n,m+ik)\colon (n,m,k)\in \nit^{*} \times \nit \times \nit^{*}\} = A\times B$ and, consequently, the graph \(G\) is path-complete and path-proximinal for $A$ and $B$ with respect to $d$.
\end{example}

The next theorem follows from Corollary~\ref{c3.10} and Theorem~\ref{t3.7}.

\begin{theorem}\label{t3.16}
Let $G$ be a graph. Then the following statements are equivalent:
\begin{enumerate}
\item\label{t3.16:s1} $G$ does not contain any isolated vertices.
\item\label{t3.16:s2} $G$ is a path-proximinal graph for a semimetric space.
\item\label{t3.16:s3} \(G\) is a path-proximinal graph for a metric space.
\end{enumerate}	
\end{theorem}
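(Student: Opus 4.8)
The plan is to establish the cycle $\ref{t3.16:s1} \Rightarrow \ref{t3.16:s3} \Rightarrow \ref{t3.16:s2} \Rightarrow \ref{t3.16:s1}$, relying on Corollary~\ref{c3.10} (which ties the absence of isolated vertices to path-bipartiteness via the equality $G = G'$) and on Theorem~\ref{t3.7} (which promotes any path-bipartite graph to a path-proximinal one for a suitable metric).

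For $\ref{t3.16:s1} \Rightarrow \ref{t3.16:s3}$ I would first observe that a graph with no isolated vertices is automatically nonempty: its vertex set is nonempty by the very definition of a simple graph, so some edge must exist. In that case the equality $G = G'$ holds, since $V(G')$ is precisely the set of non-isolated vertices of $G$ while $E(G') = E(G)$. Corollary~\ref{c3.10} then yields disjoint nonempty $A$, $B \subseteq V(G)$ such that $G$ is a path-bipartite graph of $A$ and $B$, and Theorem~\ref{t3.7} provides a metric $d$ on $X := V(G)$ making $G$ path-proximinal for $A$ and $B$ with respect to $d$; by Definition~\ref{d3.7} this is exactly statement~\ref{t3.16:s3}.

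The implication $\ref{t3.16:s3} \Rightarrow \ref{t3.16:s2}$ is immediate, since every metric is a semimetric, so a graph that is path-proximinal for a metric space is also path-proximinal for a semimetric space. For $\ref{t3.16:s2} \Rightarrow \ref{t3.16:s1}$ I would unwind Definition~\ref{d3.7}: path-proximinality for a semimetric space supplies a semimetric $d$ on $V(G)$ and disjoint nonempty $A$, $B \subseteq V(G)$ with $G$ path-proximinal for $A$ and $B$ w.r.t.\ $d$, and in particular $G$ is a path-bipartite graph of $A$ and $B$ (this is built into the notion of being path-proximinal for $A$ and $B$). Corollary~\ref{c3.10} then forces $G = G'$, i.e.\ every vertex of $G$ is non-isolated, which closes the cycle.

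I do not anticipate any real obstacle: once Corollary~\ref{c3.10} and Theorem~\ref{t3.7} are available, the whole argument is bookkeeping with the definitions. The only point worth a careful sentence is the equivalence between ``$G$ has no isolated vertices'' and ``$G = G'$'', together with the remark that this condition already entails that $G$ is nonempty, so that $G'$ is defined and Corollary~\ref{c3.10} can be invoked.
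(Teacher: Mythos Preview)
Your proposal is correct and follows exactly the approach the paper intends: the paper's own proof is the single sentence ``The next theorem follows from Corollary~\ref{c3.10} and Theorem~\ref{t3.7}'', and you have simply spelled out the cycle $\ref{t3.16:s1} \Rightarrow \ref{t3.16:s3} \Rightarrow \ref{t3.16:s2} \Rightarrow \ref{t3.16:s1}$ that those two results yield. Your care in noting that $G$ must be nonempty (so that $G'$ is defined) before invoking Corollary~\ref{c3.10} is appropriate and fills in the only detail the paper leaves implicit.
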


\begin{proposition}\label{p3.22}
Let a bipartite graph \(G\) with fixed parts \(A\) and \(B\) be proximinal for a semimetric space \((X,d)\), where \(X = V(G) = A \cup B\). Then $G$ is a path-proximinal graph if and only if the equalities
\begin{equation}\label{p3.22:e1}
A_0 = A \quad \text{and} \quad B_0 = B
\end{equation}
holds, where the sets \(A_0\) and \(B_0\) are defined by \eqref{d1.2:e1} and \eqref{d1.2:e2}, respectively.
\end{proposition}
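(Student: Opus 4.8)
The strategy is to reduce the proposition to Theorem~\ref{t3.16}, which asserts that a graph is path-proximinal (for some semimetric space, equivalently some metric space) precisely when it has no isolated vertices. Thus the whole statement follows once we check that, under the standing hypotheses, $G$ has no isolated vertices if and only if $A_0 = A$ and $B_0 = B$.

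First I would record the elementary observation linking isolated vertices to the sets $A_0$, $B_0$. Since $G$ is bipartite with parts $A$ and $B$, every edge of $G$ has one end in $A$ and one end in $B$; hence a vertex $a \in A$ is non-isolated if and only if there is some $b \in B$ with $\{a,b\} \in E(G)$. By the defining equivalence~\eqref{d1.5:e1} of a proximinal bipartite graph, $\{a,b\} \in E(G)$ is equivalent to $d(a,b) = \dist(A,B)$, which, compared with the definition~\eqref{d1.2:e1} of $A_0$, says exactly that $a \in A$ is non-isolated if and only if $a \in A_0$. The symmetric argument using~\eqref{d1.2:e2} shows that $b \in B$ is non-isolated if and only if $b \in B_0$.

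Next, since $X = V(G) = A \cup B$ with $A$, $B$ disjoint and nonempty, every vertex of $G$ lies in exactly one of $A$, $B$; hence $G$ has no isolated vertices if and only if every $a \in A$ lies in $A_0$ and every $b \in B$ lies in $B_0$, that is, if and only if $A_0 = A$ and $B_0 = B$ (the inclusions $A_0 \subseteq A$ and $B_0 \subseteq B$ being automatic). Combining this equivalence with Theorem~\ref{t3.16} — the chain $G$ path-proximinal $\Leftrightarrow$ $G$ has no isolated vertices $\Leftrightarrow$ $A_0 = A$ and $B_0 = B$ — completes the proof. There is essentially no technical obstacle here; the only point requiring care is conceptual, namely that ``path-proximinal'' in the conclusion is the intrinsic property of Definition~\ref{d3.7}, witnessed by some semimetric and some bipartition of $V(G)$, not necessarily the given $(X,d)$ and the given parts $A$, $B$. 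Theorem~\ref{t3.16} is exactly what makes this intrinsic property tractable, converting it into the purely combinatorial condition of having no isolated vertices, which the proximinality hypothesis then translates into the equalities $A_0 = A$ and $B_0 = B$.
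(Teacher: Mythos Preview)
Your proof is correct and follows essentially the same route as the paper: both reduce the statement to Theorem~\ref{t3.16} (the paper phrases ``no isolated vertices'' as the equivalent condition \(G' = G\)), and then use the defining equivalence~\eqref{d1.5:e1} of a proximinal bipartite graph together with the definitions of \(A_0\), \(B_0\) to identify the non-isolated vertices with \(A_0 \cup B_0\). Your additional remark that ``path-proximinal'' in the conclusion is the intrinsic notion of Definition~\ref{d3.7}, not tied to the given \(d\), \(A\), \(B\), is a helpful clarification that the paper leaves implicit.
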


\begin{proof}
By Theorem~\ref{t3.16}, the graph \(G\) is path-proximinal iff \(G' = G\) holds. Since \(G\) is proximinal for semimetric space \((X, d)\), Definition~\ref{d2.14} and Definition~\ref{d2.2} imply that the equality \(G' = G\) holds if and only if \eqref{p3.22:e1} is valid.
\end{proof}

The conditions \(A_0 = A\) and \(B_0 = B\) guarantee the existence of a semimetric \(\rho \colon X \times X \to [0, \infty[\) such that \(G\) is path-proximinal w.r.t. \(\rho\). In the next proposition we describe geometric properties of the space \((X, d)\) under which the equality \(d = \rho\) is possible.

\begin{proposition}\label{p3.9}
Let \(G = G(A, B)\) be a proximinal graph for a semimetric space \((X, d)\) with \(X = A \cup B\) and let \(G = G'\) hold. Then the following conditions are equivalent:
\begin{enumerate}
\item\label{p3.9:c1} \(G\) is path-proximinal for \(A\) and \(B\) w.r.t. the semimetric \(d\).
\item\label{p3.9:c2} The inequality
\begin{equation}\label{p3.9:e1}
d(x, y) > \dist(A, B)
\end{equation}
holds whenever \(x \neq y\) and \(x\), \(y \in A\) or \(x\), \(y \in B\).
\end{enumerate}
\end{proposition}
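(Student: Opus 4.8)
The plan is to exploit the fact that $G$, being a proximinal bipartite graph with parts $A$ and $B$, has all of its edges between $A$ and $B$; the equivalence \eqref{d3.10:e1} that defines path-proximinality with respect to $d$ then splits into a statement about pairs in $A\times B$ which is automatic, and a statement about pairs inside $A$ or inside $B$ which is exactly \eqref{p3.9:e1}.

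Before treating the equivalence I would first check that $G$ is automatically a path-bipartite graph of $A$ and $B$, so that condition \ref{p3.9:c1} reduces to the validity of \eqref{d3.10:e1}. Since $G$ is proximinal for $(X,d)$, the sets $A$ and $B$ are disjoint, nonempty, and $A\cup B=V(G)$; since $G=G'$, the graph $G$ has no isolated vertices, so every connected component of $G$ contains at least one edge, and by bipartiteness that edge joins a vertex of $A$ to a vertex of $B$. Hence every connected component of $G$ meets both $A$ and $B$, so $G$ is path-bipartite of $A$ and $B$ by Theorem~\ref{t3.9}; since $A$ and $B$ are also proximinal by hypothesis, condition \ref{p3.9:c1} is equivalent to saying that \eqref{d3.10:e1} holds for all distinct $x$, $y\in X$.

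For \(\ref{p3.9:c1} \Rightarrow \ref{p3.9:c2}\): given distinct $x$, $y$ with $x,y\in A$ or $x,y\in B$, bipartiteness gives $\{x,y\}\notin E(G)$, so \eqref{d3.10:e1} yields $d(x,y)>\dist(A,B)$, which is \eqref{p3.9:e1}. For \(\ref{p3.9:c2} \Rightarrow \ref{p3.9:c1}\) I would verify \eqref{d3.10:e1} for distinct $x$, $y\in X$ by distinguishing two cases. If one of $x$, $y$ lies in $A$ and the other in $B$, then Definition~\ref{d2.4} gives $\{x,y\}\in E(G)\Leftrightarrow d(x,y)=\dist(A,B)$, and since $d(x,y)\geqslant\dist(A,B)$ always holds by \eqref{e1.2}, this coincides with $\{x,y\}\in E(G)\Leftrightarrow d(x,y)\leqslant\dist(A,B)$. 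If instead $x,y\in A$ or $x,y\in B$, then $\{x,y\}\notin E(G)$ by bipartiteness while $d(x,y)>\dist(A,B)$ by \ref{p3.9:c2}, so both sides of \eqref{d3.10:e1} are false. Thus \eqref{d3.10:e1} holds for all distinct $x$, $y\in X$, and $G$ is path-proximinal for $A$ and $B$ with respect to $d$.

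I do not expect a genuine obstacle here: once one observes that $d(x,y)\leqslant\dist(A,B)$ is equivalent to $d(x,y)=\dist(A,B)$ for $(x,y)\in A\times B$, the rest is bookkeeping. The one step that needs a word of care is the preliminary reduction, namely that $G$ is path-bipartite of \emph{these} fixed parts $A$ and $B$ and not merely of some pair of parts, which is why Theorem~\ref{t3.9} is invoked there rather than Corollary~\ref{c3.10}.
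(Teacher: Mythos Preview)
Your proof is correct and follows essentially the same route as the paper: both directions hinge on the bipartiteness of \(G\) forcing \(\{x,y\}\notin E(G)\) for same-part pairs, and on Definition~\ref{d3.7} converting that into the inequality~\eqref{p3.9:e1}. You are simply more explicit than the paper on two points: the preliminary verification via Theorem~\ref{t3.9} that \(G\) is path-bipartite of the \emph{given} parts \(A\), \(B\) (the paper leaves this implicit), and the \(\ref{p3.9:c2}\Rightarrow\ref{p3.9:c1}\) direction, which the paper dispatches with ``can be proved similarly'' while you spell out the case split and the observation that \(d(x,y)\leqslant\dist(A,B)\) and \(d(x,y)=\dist(A,B)\) coincide for \((x,y)\in A\times B\).
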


\begin{proof}
\(\ref{p3.9:c1} \Rightarrow \ref{p3.9:c2}\). Let \ref{p3.9:c1} hold. Definition~\ref{d2.4} implies that
\begin{equation}\label{p3.9:e2}
\{x, y\} \notin E(G)
\end{equation}
whenever \(x\), \(y \in A\) or \(x\), \(y \in B\). By Definition~\ref{d3.7}, \eqref{p3.9:e2} holds for distinct points \(x\), \(y \in X\) if and only if we have \eqref{p3.9:e1} for these points. Condition~\ref{p3.9:c2} follows.

The validity of \(\ref{p3.9:c2} \Rightarrow \ref{p3.9:c1}\) can be proved similarly.
\end{proof}

Proposition~\ref{p3.9} admits the following ``ultrametric modification''.

\begin{theorem}\label{t3.10}
The following conditions are equivalent for every graph \(G\):
\begin{enumerate}
\item\label{t3.10:s1} For every vertex \(v\) of \(G\) there is a unique vertex \(u\) of \(G\) such that \(u\) and \(v\) are adjacent.
\item\label{t3.10:s2} There are an ultrametric \(d\) on the set \(X = V(G)\) and disjoint proximinal subsets \(A\), \(B\) of \(X\) such that \(X = A \cup B\), and \(G\) is bipartite with the parts \(A\) and \(B\), and path-proximinal w.r.t. the ultrametric \(d\).
\end{enumerate}
\end{theorem}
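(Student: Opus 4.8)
The approach is to observe that condition~\ref{t3.10:s1} simply says that every connected component of $G$ is a single edge, i.e.\ that $G$ is a disjoint union of copies of $K_2$, and to use as candidate ultrametric the very metric~\eqref{t3.7:e1} constructed in the proof of Theorem~\ref{t3.7}: that metric takes only the values $0$, $1$, $2$, and it satisfies the strong triangle inequality exactly when no vertex has two distinct neighbours. Thus the theorem is, morally, the statement ``the metric~\eqref{t3.7:e1} is an ultrametric iff every vertex of $G$ has degree $1$'', combined with the fact (already contained in Theorem~\ref{t3.16}) that path-proximinality forces every vertex to have degree at least $1$.

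For $\ref{t3.10:s1}\Rightarrow\ref{t3.10:s2}$ I would argue as follows. Write the components of $G$ as edges $\{a_i,b_i\}$, $i\in I$, with $I\neq\varnothing$ (since $V(G)$ is nonempty), and set $A:=\{a_i\colon i\in I\}$ and $B:=\{b_i\colon i\in I\}$. Then $A$, $B$ are disjoint, $A\cup B=V(G)$, the graph $G$ is bipartite with parts $A$, $B$, and every component meets both parts, so $G$ is path-bipartite of $A$ and $B$ by Theorem~\ref{t3.9}. Define $d$ on $X:=V(G)$ by~\eqref{t3.7:e1}; exactly as in the proof of Theorem~\ref{t3.7}, the sets $A$ and $B$ are proximinal (the value set of $d$ is finite), $\dist(A,B)=1$, and the equivalence~\eqref{d3.10:e1} holds for all distinct $x$, $y$. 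The only remaining point is that $d$ is an ultrametric: for distinct $x$, $y$, $z$ the inequality $d(x,y)\leqslant\max\{d(x,z),d(z,y)\}$ is automatic when $d(x,y)=1$, while if $d(x,y)=2$ then $\{x,y\}\notin E(G)$ and, by~\ref{t3.10:s1}, the vertex $z$ cannot be adjacent to both $x$ and $y$, so at least one of $\{x,z\}$, $\{z,y\}$ is not an edge and the right-hand side equals $2$. Hence $d$ is an ultrametric and~\ref{t3.10:s2} holds.

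For $\ref{t3.10:s2}\Rightarrow\ref{t3.10:s1}$: since $G$ is path-proximinal it is path-bipartite of $A$ and $B$, hence $G=G'$ by Corollary~\ref{c3.10}, so $G$ has no isolated vertices and every vertex has degree at least $1$. To obtain degree at most $1$, suppose some vertex $v$ has distinct neighbours $u_1$, $u_2$; bipartiteness lets us take $v\in A$, so that $u_1,u_2\in B$. From~\eqref{d3.10:e1} together with $v\in A$, $u_j\in B$ we get $d(v,u_j)=\dist(A,B)$ for $j=1,2$, whereas $\{u_1,u_2\}\notin E(G)$ (both lie in $B$) forces $d(u_1,u_2)>\dist(A,B)$ by~\eqref{d3.10:e1}; but the strong triangle inequality gives $d(u_1,u_2)\leqslant\max\{d(v,u_1),d(v,u_2)\}=\dist(A,B)$, a contradiction. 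Hence every vertex has degree exactly $1$. I do not expect a genuine obstacle: the only spots needing care are the ultrametric verification in the first implication, where the degree hypothesis is exactly what is used, and, dually, the short computation with the strong triangle inequality in the second; everything else is bookkeeping with Definitions~\ref{d3.6}--\ref{d3.7} and the cited results.
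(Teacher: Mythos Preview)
Your proposal is correct and follows essentially the same approach as the paper: for \ref{t3.10:s1}$\Rightarrow$\ref{t3.10:s2} both construct $A$, $B$ by picking one endpoint of each edge and use the $0/1/2$ metric of~\eqref{t3.7:e1}, verifying the strong triangle inequality via the degree hypothesis; for \ref{t3.10:s2}$\Rightarrow$\ref{t3.10:s1} both get degree $\geqslant 1$ from path-proximinality (the paper cites Theorem~\ref{t3.16} directly, you go via Corollary~\ref{c3.10}) and then derive the same contradiction from bipartiteness and the strong triangle inequality. The only cosmetic differences are that the paper makes the appeal to the axiom of choice explicit and re-derives the path-proximinality of the $0/1/2$ metric rather than citing Theorem~\ref{t3.7}.
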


\begin{proof}
\(\ref{t3.10:s1} \Rightarrow \ref{t3.10:s2}\). Let \(G\) satisfy condition~\ref{t3.10:s1}. Then \(G\) has no isolated vertices and, hence,
\begin{equation}\label{t3.10:e1}
V(G) = \bigcup_{\{x, y\} \in E(G)} \{x, y\}
\end{equation}
holds. Condition~\ref{t3.10:s1} implies also the equality
\begin{equation}\label{t3.10:e2}
\{x, y\} \cap \{u, v\} = \varnothing
\end{equation}
whenever \(\{x, y\}\) and \(\{u, v\}\) are different edges of \(G\). Now using \eqref{t3.10:e1} and \eqref{t3.10:e2} and the axiom of choice we can find disjoint subsets \(A\) and \(B\) of \(V(G)\) such that \(V(G) = A \cup B\) and
\[
\{u, v\} \cap A \neq \varnothing \neq \{u, v\} \cap B
\]
for every \(\{u, v\} \in E(G)\). Write
\begin{equation}\label{t3.10:e3}
X = A \cup B
\end{equation}
and define a semimetric \(d \colon X \times X \to [0, \infty[\) as
\begin{equation}\label{t3.10:e4}
d(x, y) = \begin{cases}
0 & \text{if } x = y\\
1 & \text{if } \{x, y\} \in E(G)\\
2 & \text{otherwise}.
\end{cases}
\end{equation}
We claim that \(d\) is an ultrametric on \(X\). By definition, \(d\) is an ultrametric if
\begin{equation}\label{t3.10:e5}
d(x, y) \leqslant \max\{d(x, z), d(z, y)\}
\end{equation}
holds for all \(x\), \(y\), \(z \in X\). It is easy to see that \eqref{t3.10:e5} holds for arbitrary semimetric \(d\) on \(X\) if \(|\{x, y, z\}| \leqslant 2\). Let \(x\), \(y\), \(z\) be pairwise distinct points of \(X\). If \eqref{t3.10:e5} is false,
\[
d(x, y) > \max\{d(x, z), d(z, y)\},
\]
then, using \eqref{t3.10:e4}, we see that \(d(x, y) = 2\) and
\begin{equation}\label{t3.10:e6}
d(x, z) = d(z, y) = 1.
\end{equation}
It follows from \eqref{t3.10:e4} and \eqref{t3.10:e6} that the vertex \(z\) has two different adjacent vertices \(x\) and \(y\), contrary to \ref{t3.10:s1}. Thus, \(d\) is an ultrametric on \(X\).

The sets \(A\) and \(B\) do not intersect by construction and, in addition, \eqref{t3.10:e3} holds. Since the set \(\{d(x, y) \colon x, y \in X\}\) is finite by~\eqref{t3.10:e4}, these sets are also proximinal subsets of \((X, d)\). It follows from Definition~\ref{d2.3} that \(G\) is bipartite graph with parts \(A\) and \(B\). Equality~\eqref{t3.10:e4} and the definition of the sets \(A\) and \(B\) also give us the equality \(\dist(A, B) = 1\). Now using~\eqref{t3.10:e4} and Definition~\ref{d3.7} we see that \(G\) is path-proximinal w.r.t. the ultrametric \(d\).

\(\ref{t3.10:s2} \Rightarrow \ref{t3.10:s1}\). Let \(G\) satisfy condition~\ref{t3.10:s2}. Then \(G\) is path-proximinal for an ultrametric space \((X, d)\) and, consequently, \(G\) has no isolated vertices by Theorem~\ref{t3.16}. Suppose that there is a vertex \(v\) of \(G\) such that
\begin{equation}\label{t3.10:e7}
\{v, u\}, \{v, w\} \in E(G)
\end{equation}
for some different \(u\), \(w \in V(G)\). By condition~\ref{t3.10:s2}, \(G\) is a bipartite graph with parts \(A\) and \(B\). Consequently, \eqref{t3.10:e7} implies either
\begin{equation}\label{t3.10:e8}
v \in A \text{ and } u, w \in B
\end{equation}
or
\begin{equation}\label{t3.10:e9}
u, w \in A \text{ and } v \in B.
\end{equation}
Suppose that \eqref{t3.10:e8} holds. Then using the strong triangle inequality and Definition~\ref{d2.4} we obtain
\[
d(u, w) \leqslant \max \{d(u, v), d(v, w)\} = \dist(A, B).
\]
Hence, the vertices \(u\) and \(w\) are adjacent in \(G\),
\begin{equation}\label{t3.10:e10}
\{u, w\} \in E(G)
\end{equation}
by Definition~\ref{d3.7}. Now it suffices to note that \eqref{t3.10:e8} implies \(\{u, w\} \notin E(G)\) by Definition~\ref{d2.4}, contrary to~\eqref{t3.10:e10}. Thus, \eqref{t3.10:e8} is false. Analogously, we obtain that \eqref{t3.10:e9} is also false. The proof of the validity of \(\ref{t3.10:s2} \Rightarrow \ref{t3.10:s1}\) is complete.
\end{proof}

Theorem~\ref{t3.10} implies, in particular, the following.

\begin{corollary}\label{c3.12}
A graph \(G\) is simultaneously proximinal and path-proximinal for an ultrametric space \((X, d)\) and given disjoint proximinal subsets \(A\) and \(B\) of \(X\) iff every connected component of \(G\) has exactly \(2\) vertices.
\end{corollary}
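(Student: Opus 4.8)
The plan is to read off Corollary~\ref{c3.12} directly from Theorem~\ref{t3.10} by checking that its two conditions translate, on the one hand, into the graph-theoretic condition ``every connected component has exactly two vertices'' and, on the other hand, into ``simultaneously proximinal and path-proximinal for an ultrametric''.

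First I would record the elementary graph-theoretic equivalence: for a graph \(G\), the statement that every connected component of \(G\) has exactly \(2\) vertices is equivalent to condition~\ref{t3.10:s1} of Theorem~\ref{t3.10}, namely that every vertex \(v\) of \(G\) has a unique adjacent vertex. Indeed, a connected graph on two vertices is a single edge, so if all components have two vertices then every vertex has exactly one neighbour; conversely, if \(v\) has a unique neighbour \(u\), then \(v\) is in turn the unique neighbour of \(u\), so the one-edge graph on \(\{v,u\}\) is a connected subgraph of \(G\) containing \(v\) that admits no proper connected extension, hence it is the connected component of \(v\) (this uses the description of components via Lemma~\ref{l2.8}), and it has two vertices.

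Next I would verify that condition~\ref{t3.10:s2} of Theorem~\ref{t3.10} says precisely that \(G\) is simultaneously proximinal and path-proximinal for some ultrametric space \((X,d)\) with disjoint proximinal parts \(A\), \(B\). The only nontrivial point is that ``\(G\) bipartite with parts \(A\), \(B\)'' together with ``\(G\) path-proximinal w.r.t.\ \(d\)'' already forces \(G\) to be proximinal in the sense of Definition~\ref{d2.4}: for \(a\in A\) and \(b\in B\) one always has \(d(a,b)\geqslant\dist(A,B)\) by \eqref{e1.2}, so the inequality \(d(a,b)\leqslant\dist(A,B)\) appearing in \eqref{d3.10:e1} is equivalent to the equality \(d(a,b)=\dist(A,B)\) appearing in \eqref{d1.5:e1}; since \(A\), \(B\) are disjoint proximinal subsets of \((X,d)\), \(G\) is proximinal for \((X,d)\). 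In the other direction, being proximinal already makes \(G\) bipartite with parts \(A\), \(B\), and path-proximinality makes \(G\) path-bipartite, whence \(X=V(G)=A\cup B\) by Proposition~\ref{p3.2}; thus the hypotheses of condition~\ref{t3.10:s2} are met.

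Finally I would combine the two reformulations with the equivalence \ref{t3.10:s1}\(\Leftrightarrow\)\ref{t3.10:s2} of Theorem~\ref{t3.10}: \(G\) is simultaneously proximinal and path-proximinal for an ultrametric space and some disjoint proximinal parts \(A\), \(B\) iff condition~\ref{t3.10:s2} holds iff condition~\ref{t3.10:s1} holds iff every connected component of \(G\) has exactly \(2\) vertices. I do not expect a real obstacle in this argument; the only place demanding a little care is the middle step, namely identifying ``path-proximinal between the parts'' with ``proximinal'', which rests solely on the trivial bound \(d(a,b)\geqslant\dist(A,B)\) for \(a\in A\), \(b\in B\).
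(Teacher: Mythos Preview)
Your proposal is correct and follows exactly the route the paper intends: the paper states the corollary with no proof beyond the remark that it ``implies, in particular'' from Theorem~\ref{t3.10}, and you have supplied the straightforward unpacking of that implication (the degree-one condition~\ref{t3.10:s1} is equivalent to all components having two vertices, and condition~\ref{t3.10:s2} is equivalent to simultaneous proximinality and path-proximinality for an ultrametric, via the trivial bound \(d(a,b)\geqslant\dist(A,B)\)).
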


\begin{corollary}\label{c3.11}
Let \(G\) be a bipartite graph with parts \(A\) and \(B\). Suppose that there is an ultrametric \(d \colon X \times X \to [0, \infty[\) such that \(G\) is path-proximinal with respect to \(d\). Then the following conditions are equivalent:
\begin{enumerate}
\item\label{c3.11:s1} \(G\) is connected.
\item\label{c3.11:s2} \(G\) is complete.
\item\label{c3.11:s3} The induced-bipartite subgraph \(G[A, B]\) of \(G\) is complete bipartite.
\item\label{c3.11:s4} \(G\) is path-complete.
\end{enumerate}
\end{corollary}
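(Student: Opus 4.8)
The plan is to show that the ultrametric hypothesis forces $G$ to be a disjoint union of single edges, and then to run a short cyclic implication through the four conditions; all of them will turn out to be equivalent to $G$ being a single edge.

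First I would note that, since $G$ is bipartite with parts $A$, $B$ and is path-proximinal for these parts with respect to the ultrametric $d$, condition~\ref{t3.10:s2} of Theorem~\ref{t3.10} holds: the parts $A$, $B$ are disjoint, their union is $V(G) = X$, and they are proximinal in $(X,d)$ because proximinality of $A$ and $B$ is built into path-proximinality (Definition~\ref{d3.7}). Hence Theorem~\ref{t3.10} yields condition~\ref{t3.10:s1}: every vertex of $G$ has a unique adjacent vertex. Equivalently (cf.\ Corollary~\ref{c3.12}), every connected component of $G$ is a single edge, and that edge joins a point of $A$ to a point of $B$ because $G$ is bipartite with parts $A$, $B$; at least one such component exists since $V(G) = A\cup B\neq\varnothing$. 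I would also record that, $G$ being bipartite with parts $A$, $B$, every edge of $G$ has one end in $A$ and one end in $B$, so $G[A,B] = G$.

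Next I would close the cycle \ref{c3.11:s1}$\Rightarrow$\ref{c3.11:s2}$\Rightarrow$\ref{c3.11:s3}$\Rightarrow$\ref{c3.11:s4}$\Rightarrow$\ref{c3.11:s1}. For \ref{c3.11:s1}$\Rightarrow$\ref{c3.11:s2}: a disjoint union of single edges which is connected consists of just one edge, and a graph with two vertices and a single edge is complete. For \ref{c3.11:s2}$\Rightarrow$\ref{c3.11:s3}: two vertices lying in distinct components are non-adjacent, so completeness forces a single component; then $|V(G)| = 2$, hence $|A| = |B| = 1$, and $G[A,B] = G$ is the complete bipartite graph on these one-point parts. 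For \ref{c3.11:s3}$\Rightarrow$\ref{c3.11:s4}: since $G[A,B] = G$, condition~\ref{c3.11:s3} says each $a\in A$ is adjacent to each $b\in B$; as every vertex has only one neighbour this forces $|A| = |B| = 1$, say $A = \{a\}$ and $B = \{b\}$ with $\{a,b\}\in E(G)$, and then the edge $\{a,b\}$ is a be-path of $A$ and $B$ joining $a$ and $b$, so $\mathcal{B}_{path}(G) = \{(a,b)\} = A\times B$, i.e.\ $G$ is path-complete. Finally, \ref{c3.11:s4}$\Rightarrow$\ref{c3.11:s1} is immediate from Remark~\ref{r3.7}.

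Once the unique-neighbour property is in hand the rest is bookkeeping, so I do not expect a genuine obstacle. The two points needing a little care are to invoke the equality $G[A,B] = G$ in the steps involving condition~\ref{c3.11:s3} (valid precisely because $G$ is bipartite with the parts $A$, $B$, so no edge lies inside a part), and to keep straight that ``path-complete'' refers to the given parts $A$, $B$, so that the assertion to be verified is $\mathcal{B}_{path}(G) = A\times B$.
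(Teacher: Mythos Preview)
Your argument is correct and follows exactly the route the paper intends: the corollary is stated immediately after Theorem~\ref{t3.10} and Corollary~\ref{c3.12} without a separate proof, so the expected derivation is precisely to invoke Theorem~\ref{t3.10} to obtain the unique-neighbour property and then observe that all four conditions collapse to ``$G$ is a single edge''. Your observation that $G[A,B]=G$ (since $G$ is bipartite with parts $A$, $B$) and your use of Remark~\ref{r3.7} for \ref{c3.11:s4}$\Rightarrow$\ref{c3.11:s1} are the right bookkeeping steps.
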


\section*{Funding}

Oleksiy Dovgoshey was partially supported by Finnish Society of Sciences and Letters, Project ``Intrinic Metrics of Domains in Geometric Function Theory and Graphs''.

\end{document}